\documentclass[10pt]{article}
\usepackage{latexsym}
\usepackage{ulem,rotating,amstext,amsmath,amssymb,amsthm,amscd,epic,eepic}
\usepackage[all]{xy}
\usepackage{graphicx}
\usepackage{color}
\usepackage{sectsty}
\usepackage{setspace}
%
%
\newtheorem{thm}{Theorem}[section]
\newtheorem{cor}[thm]{Corollary}
\newtheorem{lem}[thm]{Lemma}
\newtheorem{prop}[thm]{Proposition}
\newtheorem{defn}[thm]{Definition}

\newcommand{\quash}[1]{}			

  %
   \let\ud=\d       
  %
  %

\let\d=\delta

  %

  %
\def\sll#1{\rlap{\,\raise1pt\hbox{/}}{#1}}

\def\beq#1{\begin{equation}\label{#1}}
\def\eeq{\end{equation}}

\let\sss=\scriptscriptstyle
\let\SSS=\scriptstyle

\let\2=\underline

\let\3=\ldots
\def\rlx{\relax\leavevmode}
\def\inbar{\vrule height1.5ex width.8pt depth0pt}
\def\sinbar{\vrule height1ex width.45pt depth0pt}
\def\ssinbar{\vrule height.7ex width.35pt depth0pt}
\def\IC{{\relax\leavevmode
                \ifmmode\mathchoice
                       {\hbox{\kern.25em\inbar\kern-.3em{\sf C}}}
                       {\hbox{\kern.25em\inbar\kern-.3em{\sf C}}}
                       {\hbox{\kern.20em\sinbar\kern-.25em$\SSS\sf C}}
                       {\hbox{\kern.18em\ssinbar\kern-.22em$\sss\sf C }}
                \else{\hbox{\kern.3em\inbar\kern-.3em{\rm C}}}\fi}}
\def\Ik{{\rlx{\rm I\kern-.18em k}}}

\def\IP{{\rlx{\rm I\kern-.18em P}}}
\def\IQ{{\relax\leavevmode
                \ifmmode\mathchoice
                       {\hbox{\kern.33em\inbar\kern-.3em{\rm Q}}}
                       {\hbox{\kern.33em\inbar\kern-.3em{\rm Q}}}
                       {\hbox{\kern.28em\sinbar\kern-.25em$\SSS\rm Q}}
                       {\hbox{\kern.25em\ssinbar\kern-.22em\sss\rm Q}}
                \else{\hbox{\kern.3em\inbar\kern-.3em{\rm C}}}\fi}}
\def\IR{{\rlx{\rm I\kern-.18em R}}}
\def\ZZ{{\relax\leavevmode
                \ifmmode\mathchoice
                       {\hbox{\sf Z\kern-.4em Z}}
                       {\hbox{\sf Z\kern-.4em Z}}
                       {\lower.9pt\hbox{\scriptsize\sf Z\kern-.36em Z}}
                       {\lower1.2pt\hbox{\tiny\sf Z\kern-.36em Z}}
                \else{\sf Z\kern-.4em Z}\fi}}
\def\Ione{{\rlx{\rm 1\kern-3pt l}}}
\def\JGP#1{{\itshape J.\,Geom.\,Phys.\,}{\bf#1\,}}

\def\AMS#1{{\itshape Ann.\,Math.\,Stud.\,}{\bf#1\,}}

\def\IM#1{{\itshape Inv.\,Math.\,}{\bf#1\,}}

\def\NP#1{{\itshape Nucl.\,Phys.\,}{\bf#1\,}}

\def\MPL#1{{\itshape Mod.\,Phys.\,Lett.\,}{\bf#1\,}}
\def\PRL#1{{\itshape Phys.\,Rev.\,Lett.\,}{\bf#1\,}}
\def\CMP#1{{\itshape Commun.\,Math.\,Phys.\,}{\bf#1\,}}
\def\CP#1#2{\relax\ifmmode\IP^{#1}_{#2}\else\IP$^{#1}_{#2}\fi}

\def\ytem#1{\par\noindent\hbox to\parindent{\hss\em#1.~}\ignorespaces}

\lineskip=0pt   \parskip\medskipamount 
\oddsidemargin=-5mm\evensidemargin=\oddsidemargin
\topmargin=0mm \textheight=230mm     \hsize=175.3mm \textwidth=\hsize
\abovedisplayskip=1em plus.3em minus.5em            \headsep=0in
\belowdisplayskip=1em plus.3em minus.5em            \headheight=0in
\abovedisplayshortskip=.5em plus.2em minus.4em      \footskip=4ex
\belowdisplayshortskip=.5em plus.2em minus.4em      
\thicklines     \setlength{\unitlength}{1mm}
\def\Label#1{\label{#1}%
    \smash{\hbox to0pt{\raise1ex\hbox{\tiny\{#1\}}\hss}}}
\def\noLabels{\let\Label=\label}

\catcode`@=11   
\newbox\t@b@x
\def\rightarrowfill{$\m@th \mathord- \mkern-6mu
      \cleaders\hbox{$\mkern-2mu \mathord- \mkern-2mu$}\hfill
       \mkern-6mu \mathord\rightarrow$}
\def\tooo#1{\setbox\t@b@x=\hbox{\footnotesize$#1$}%
              \mathrel{\mathop{\hbox to\wd\t@b@x{\rightarrowfill}}%
               \limits^{#1}}\,}
\def\leftarrowfill{$\m@th \mathord\leftarrow \mkern-6mu
      \cleaders\hbox{$\mkern-2mu \mathord- \mkern-2mu$}\hfill
       \mkern-6mu \mathord-$}
\def\froo#1{\setbox\t@b@x=\hbox{\footnotesize$#1$}%
              \mathrel{\mathop{\hbox to\wd\t@b@x{\leftarrowfill}}%
               \limits^{#1}}\,}
\catcode`@=12                   

\numberwithin{equation}{section}
\begin{document}

\begin{center}

{\LARGE\bf A Perverse Sheaf Approach Toward a \\[2mm]
            Cohomology Theory for String Theory}\\[10mm]

{\bf Abdul ~Ra\ud{h}m\={a}n}\footnote{arahman@howard.edu}\\[1mm]
{\it  Department of Physics and Astronomy\\
      Howard University\\
      Washington, DC 20059}\\[5mm]

{\bf ABSTRACT}\\[3mm]
\parbox{5in}{
We present the construction and properties of a self-dual perverse sheaf $\uuline{\mathcal{S}_0}^\cdot$
whose cohomology fulfills some of the requirements of String theory as outlined in ~\cite{hub:stringy}.  
The construction of this $\uuline{\mathcal{S}_0}^\cdot$ utilizes techniques that follow from MacPherson-Vilonen ~\cite{mv:elem}.  Finally, we will discuss its properties as they relate to String theory.
}
\end{center}

\section{Introduction}

In analyzing ways to move between Calabi-Yau manifolds, Green and Hubsch in 
~\cite{greenhub:conifold1},~\cite{greenhub:conifold2} showed 
there are cases within String theory
that admit mildly singular target spaces.  These mildly singular spaces, termed conifolds 
~\cite{greenhub:conifold1},~\cite{stro:massless},~\cite{hubrah:geomhom}
consist of the usual smooth target spaces with the addition of zero-dimensional
singularities that stratify these spaces.  Since these spaces are singular,
the usual techniques for calculating cohomology do not apply.  Ideally, a cohomology
theory for String theory would apply to both smooth and 
not `too' singular target spaces\footnote{The only known criterion on the 
severity of singularization comes from supersymmetry, which is essential for vacuum 
stability~\cite{hub:singspace}.} so that determination of stringy vacua could be possible 
in all cases.  In an effort to
qualify the mathematical requirements for this (co)homology theory,
Hubsch proposed in~\cite{hubsch:best} a working definition for 
a homology theory for String theory in the case of a mildly degenerate target space.  This is recalled in
the following definition.
\begin{defn}
Let Y be a 2n-dimensional stratified space with a single isolated singularity, y. Then
\begin{equation} \label{qualphys}
SH_k(Y)=
\begin{cases}
H_k(Y), & \text{ $k > n$;}\\
H_n(Y-y) \cup H_n(Y), & \text{ $k$;}\\
H_k(Y-y), & \text{$k < n$}
\end{cases}
\end{equation}
\end{defn}
\noindent
The middle dimension case, $k$, $H_n(Y-y) \cup H_n(Y)$ is a qualitative 
way of expressing the String theory requirement that the homology group 
contain cycles from both $H_n(Y-y)$ and $H_n(Y)$.  
This means that the middle dimension homology group should be larger than
either $H_n(Y-y)$  or $H_n(Y)$.  Many homology theories were examined to attempt to
fulfill this requirement but only intersection
homology\footnote{A detailed discussion can be found in~\cite{gormac:ih1} and ~\cite{gormac:ih2}.}  
 seemed to possess most of the requirements.  However, in the middle dimension, intersection
homology provided fewer (co)cyclic classes than required by String theory
\cite{stro:massless}.  It was concluded that $H^k(Y;\uuline{IC}^\cdot)$ seemed 
to be the correct choice in all degrees except $k$, where $\uuline{IC}^\cdot$
is the sheaf of intersection chains defined on p.77 of reference~\cite{gormac:ih2}.
In this paper we will discuss the construction of a complex of sheaves $\uuline{\mathcal{S}_0}^\cdot$ such that 
$H^k(Y;\uuline{\mathcal{S}_0}^\cdot)=H^k(Y;\uuline{IC}^\cdot)$ in all degrees $k \neq n$,
but will yield more cohomology in the  
middle dimension as predicted in String Theory~\cite{stro:massless}.
\subsection{K\"ahler Package}
As mentioned in the previous section, massless fields in superstring compactifications were identified
with cohomology classes on the target space.   However, a troubling consequence 
occurs when the target space is singular.  Essentially, different
cohomology theories on singular target spaces yield different results thereby making it difficult to determine
which theory physics may favor.  Several important characteristics of the cohomology, 
which correspond to the massless fields, are based on general properties of field theories, 
specifically, the (2,2)-supersymmetric 2-dimensional world-sheet field theories.  These properties, known
as the K\"ahler package, should hold for singular and smooth target spaces.  
 Let $\mathcal{Y}$ be a smooth Calabi-Yau target space.  We will use refs. ~\cite{hubsch:best} and ~\cite{cheegormac:L2cohom} for the definition of the K\"ahler package which is stated as follows.  
\begin{enumerate}
\item
Hodge Decomposition: $H^r(\mathcal{Y},\mathbb{C})=\bigoplus_{p+q=r}H^{p,q}(\mathcal{Y})$.
\item
Complex Conjugation: $H^{(p,q)}(\mathcal{Y})=\overline{H^{(q,p)}(\mathcal{Y})}$, which follows
from CPT conjugation in the world-sheet field theory,
\item
Poincare Duality: $H^{n-p,n-q}(\mathcal{Y}) \times H^{p,q}(\mathcal{Y}) \to H^{n,n}(\mathcal{Y})$ is non-degenerate.
\item
Kunneth Formula: Given two topological spaces $X$ and $Y$. 
Then $H^r(X \times Y)=\bigoplus_{p+q=r}H^p(X) \otimes H^q(Y)$ which says that in a 
product of two spaces, harmonic forms are products
of a harmonic form from one space and a harmonic form from the other space, with 
their degrees added.
\item
Lefshetz $SL(2,\mathbb{C})$ action 

\end{enumerate}
(Remark: The Lefshetz $SL(2,\mathbb{C})$ action does not have any recognizable counterpoint in the physics applications, so we ignore it for now.)  \\
 \indent
 In the case that $\mathcal{Y}$ is a singular Calabi-Yau target space, String theory suggests that these properties of the K\"ahler package should be preserved.  Hence, a cohomology theory for String theory should not only meet the cohomology requirements qualitatively outlined by Hubsch in~\cite{hub:stringy}, but should also meet the properties of the K\"ahler package whether $\mathcal{Y}$ is singular or smooth.
\\
\indent
In this paper, we show there exists a perverse sheaf $\uuline{\mathcal{S}_0}^\cdot$ 
that fulfills these cohomology requirements 
and satisfies one part of the K\"ahler Package.  Proving the remaining parts are currently open problems.  In the next
section we present the mathematical tools used in this paper.

\subsection{Useful Mathematical Tools} \label{conv}
In this paper we will use the following definitions and statements.
\begin{defn} \label{strat}
A 2n-dimensional simple stratified space $Y$  is a compact topological space with 
one ``singular" point $y \in Y$ such that:
\begin{enumerate}
\item
$Y-\{y\}$ is a smooth 2n-dimensional manifold called the ``non-singular" part of $Y$,
\item
There exists a neighborhood $U_y$ of the singular point $y$ whose closure 
is homeomorphic to the cone $cL$ over $L$, where $L$ is a
compact $(n-1)$-dimensional submanifold of $Y$, by a homeomorphism 
$\phi:cL \to U_y$ such that $\phi(*)=y$ where * represents the cone point. The
open cone over the link will be denoted by $c^oL$.
\end{enumerate} 
\end{defn}
Let $Y$ be a simple stratified space with $Y^o$ the non-singular part of $Y$.  Define the inclusions
$j:Y^o \hookrightarrow Y$, and for the singular point $y \in Y$ 
define the inclusion $i:\{y\} \hookrightarrow Y$.
We will denote the sections of a complex of  sheaves $\uuline{S}$ over an open set U as 
as either $\Gamma(U,\uuline{S})$ or $\uuline{S}(U)$.  A constructible complex of sheaves will
be denoted as $\uuline{K}^\cdot$.  The sheaves and complexes of sheaves used in this
paper will be constructible.  In addition, $H^*_c$ represents cohomology with compact supports.\\
\indent
A morphism $f:\uuline{K}^\cdot \to \uuline{L}^\cdot$ of complexes of sheaves is said 
to be a quasi-isomorphism if  
$\uuline{H}^n(f):\uuline{H}^n(\uuline{K}^\cdot) \to \uuline{H}^n(\uuline{L}^\cdot)$ 
is an isomorphism for any n.  
Let $\uuline{K}^\cdot$ and $\uuline{L}^\cdot$ be quasi-isomorphic
complexes of sheaves in an abelian category $\mathbb{A}$. The derived category of $\mathbb{A}$, 
$\mathcal D(\mathbb{A})$, is
a category in which these two complexes of sheaves are  
isomorphic. The bounded below derived category, $\mathcal{D}^b_Y(\mathbb{A})$ , of the abelian category 
$\mathbb{A}$ on $Y$ has objects, complexes of sheaves, that are zero below some degree.
Discussions about the derived category can be found in 
sect.  1.7 in ~\cite{kassch:shvmfd}, sect. 1.3 in ~\cite{dimca:shv},
or sect. 4.1 in ~\cite{gelfman:homalg}.  \\
\indent
In this paper, $\mathcal{D}^b_Y$  will denote the bounded below derived category of 
constructible complexes sheaves of $\mathbb{Q}$-vector spaces 
on $Y$.  It is known that $\mathcal{D}^b_Y$ has enough injective objects in the sense of 
sect. 1.3 in Ref.~\cite{dimca:shv}. Furthermore, we will restrict our treatment to the case of all local systems being the constant sheaf $\uuline{\mathbb{Q}}$ with $Y$ a simple stratified space.  \\
\indent
A functor T from the category of sheaves to an abelian category $\mathbb{A}$
gives rise to a functor RT from the derived category of sheaves to the 
derived category of $\mathbb{A}$.  We will be using the definitions of the pullback, pushforward, extension by 
zero, and Verdier duality functors from ~\cite{gelfman:homalg}.  
Let $\uuline{A} \in \mathcal{D}^b_Y$ be a complex of sheaves
on $Y^o$.  Its pushforward by $j$ is defined to be the complex of sheaves $Rj_* \uuline{A}$ on $Y$
whose sections over an open set $U \subset Y$ are defined as
$Rj_*(\uuline{A})(U) = \uuline{A}(j^{-1}(U))$ and the restriction
from $U$ to $V \subset U$ is induced by the restriction from $j^{-1}(U)$ to
$j^{-1}(V)$ (p. 45 in ~\cite{gelfman:homalg}).  Let $\uuline{B} \in \mathcal{D}^b_Y$ 
be a complex of sheaves on $Y$.  
Its pullback, $j^* \uuline{B}$, is a sheaf on $Y^o$ such that 
there is an isomorphism $Hom(j^* \uuline{B}, \uuline{A}) \simeq Hom(\uuline{B}, j_* \uuline{A})$.
(p. 46 in ~\cite{gelfman:homalg})

Let $f:U \to Y$ be an open subset of Y, let $\uuline{\mathcal{F}} \in \mathcal{D}^b_Y$ 
be a complex of sheaves on U.  The extension by zero complex of sheaves 
$Rj_! \uuline{\mathcal{F}}$ denotes the subsheaf of $Rj_* \uuline{\mathcal{F}}$ given by
$\Gamma(V,Rj_! \uuline{\mathcal{F}})=\{s \in \Gamma(U \cap V,\uuline{\mathcal{F}})|supp(s) 
\text{ is closed relative to V}\}$ where V is an open subset of U and $supp(s)$ denotes the
support of the section s. More details can be found in Chap. 2 and 5 in ~\cite{gelfman:homalg}.  \\
\indent
We will need the following facts about Verdier duality.  Let $Y$ and $Z$ be topological
spaces and let $f:Y \to Z$ be a map. Verdier defined a map $f^!:\mathcal{D}^b_Z \to \mathcal{D}^b_Y$.  
The Verdier duality theorem is a canonical isomorphism in $\mathcal{D}^b_Z$,
\begin{equation} \label{verddualthm}
Rf_*R\uuline{Hom}^\cdot(\uuline{A}^\cdot,f^! \uuline{B}^\cdot) \simeq 
R\uuline{Hom}^\cdot(Rf_! \uuline{A}^\cdot,\uuline{B}^\cdot)
\end{equation}
for any $\uuline{A}^\cdot \in \mathcal{D}^b_Y$ and $\uuline{B}^\cdot \in \mathcal{D}^b_Z$.

\begin{defn}{(Dualizing Sheaf)}\label{dualshf}
Let $Y$ be a locally compact topological space and let $f:Y \to \{pt\}$ be the map to a point. 
Verdier defined the dualizing sheaf to be $\mathbb{D}_Y := f^!\mathbb{Q}$.  
\end{defn}
Let Y be a locally compact topological space and let
$\uuline{A}^\cdot \in \mathcal{D}^b_Y$ be a complex of sheaves.  
In ref. ~\cite{verd:dualcoh}, Verdier defined the duality functor $\mathcal{D}_V$ by 
$\mathcal{D}_V(\uuline{A}^\cdot) := \uuline{RHom}(\uuline{A}^\cdot, \mathbb{D}_Y)$. 
\begin{defn}{(Verdier Duality Functor)} \label{verdduualfunc}
The duality functor $\mathcal{D}_V$ has the following properties (p. 92, \cite{gormac:ih2}).
Let $Y$ and $Z$ be topological spaces and let $f:Y \to Z$ be a map.  
There exist isomorphisms in $\mathcal{D}^b_Y$ as follows:
\begin{enumerate}
\item
$\mathbb{D}_Y \cong \mathcal{D}_V(\mathbb{Q}) \cong f^!(\mathbb{D}_Z)$
\item
$\uuline{A}^\cdot \cong \mathcal{D}_V(\mathcal{D}_V(\uuline{A}^\cdot))$
\end{enumerate}
\end{defn}
\begin{defn}{(Verdier Dual Pairing ~\cite{verd:dualcoh})}\label{vdp}
Let $\uuline{S}^\cdot$ and $\uuline{T}^\cdot$ be complexes of sheaves defined on the simple stratified space $Y$.
A Verdier dual pairing between $\uuline{S}^\cdot$ and $\uuline{T}^\cdot$ is a morphism 
$\phi:\uuline{S}^\cdot \otimes \uuline{T}^\cdot \to \mathbb{D}_Y[-2n]$ which induces a 
quasi-isomorphism $\uuline{S}^\cdot \to \uuline{RHom}^\cdot( \uuline{T}^\cdot,\mathbb{D}_Y[-2n])$ where 
$\mathbb{D}_Y$ is the dualizing sheaf on $Y$.
\end{defn}

\begin{prop} \label{indmap}
Suppose $\uuline{S}^\cdot$ and $\uuline{T}^\cdot$  are complexes of sheaves with a Verdier dual pairing
$\phi$ on the simple stratified space $Y$.  Then the morphism $\phi$ induces a non-degenerate pairing on cohomology
$\hat{\phi}: H^i(Y,\uuline{S}^\cdot) \otimes H^{2n-i}(Y,\uuline{T}^\cdot) \to H^0(Y,\mathbb{D}_Y) \cong \uuline{\mathbb Q}$.
\end{prop}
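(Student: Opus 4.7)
The plan is to obtain $\hat{\phi}$ by combining the natural hypercohomology cup product with the map induced by $\phi$, and then to derive non-degeneracy from the quasi-isomorphism built into Definition~\ref{vdp} together with the Verdier duality theorem~\eqref{verddualthm}.

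First, I would construct $\hat{\phi}$ explicitly. The cup product on hypercohomology gives
$$H^i(Y,\uuline{S}^\cdot) \otimes H^{2n-i}(Y,\uuline{T}^\cdot) \to H^{2n}(Y,\uuline{S}^\cdot \otimes \uuline{T}^\cdot),$$
and post-composition with the hypercohomology of $\phi$ lands in $H^{2n}(Y, \mathbb{D}_Y[-2n]) = H^0(Y, \mathbb{D}_Y)$. For $Y$ compact and connected (Definition~\ref{strat}), the identification $H^0(Y, \mathbb{D}_Y) \cong \mathbb{Q}$ follows by applying~\eqref{verddualthm} to the structure map $f: Y \to \{pt\}$ with $\uuline{A}^\cdot = \uuline{\mathbb{Q}}_Y$.

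Next, I would establish non-degeneracy. The quasi-isomorphism of Definition~\ref{vdp} yields $H^i(Y, \uuline{S}^\cdot) \cong H^i(Y, \uuline{RHom}^\cdot(\uuline{T}^\cdot, \mathbb{D}_Y[-2n]))$. Rewriting the right-hand side as $H^i(\{pt\}, Rf_* \uuline{RHom}^\cdot(\uuline{T}^\cdot, f^!\uuline{\mathbb{Q}}[-2n]))$ and invoking~\eqref{verddualthm} a second time gives $H^i(\{pt\}, \uuline{RHom}^\cdot(Rf_! \uuline{T}^\cdot, \uuline{\mathbb{Q}}[-2n]))$. Since $Y$ is compact, $Rf_! \cong Rf_*$, so $H^j(Rf_!\uuline{T}^\cdot) = H^j(Y,\uuline{T}^\cdot)$; and since the coefficients are $\mathbb{Q}$ (a field), the derived Hom reduces on cohomology to ordinary Hom, producing $\mathrm{Hom}(H^{2n-i}(Y, \uuline{T}^\cdot), \mathbb{Q})$. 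Thus the composite is an isomorphism of $\mathbb{Q}$-vector spaces $H^i(Y, \uuline{S}^\cdot) \cong \mathrm{Hom}(H^{2n-i}(Y, \uuline{T}^\cdot), \mathbb{Q})$, which, assuming compatibility with the pairing from step one, is precisely the adjoint of $\hat{\phi}$, so $\hat{\phi}$ is non-degenerate.

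The main obstacle is verifying that this abstract composite (quasi-iso, then Verdier duality, then Hom-evaluation) really does agree with the cup-product pairing $\hat{\phi}$ constructed in step one. This is a diagram chase through the $(\otimes, \uuline{RHom}^\cdot)$ adjunction and the $(Rf_!, f^!)$ adjunction rather than a computation: one rewrites the cup product as the adjoint of the canonical evaluation $\uuline{T}^\cdot \otimes \uuline{RHom}^\cdot(\uuline{T}^\cdot, \mathbb{D}_Y[-2n]) \to \mathbb{D}_Y[-2n]$ and then appeals to naturality of these adjunctions. Tracking the degree shifts so that the cup-product pairing and the evaluation pairing line up is the delicate part, but no geometric input beyond Verdier duality itself is required.
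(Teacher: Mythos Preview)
The paper does not actually supply a proof of Proposition~\ref{indmap}: it is stated in Section~\ref{conv} as part of the background toolkit, immediately followed by the discussion of distinguished triangles, with no argument given. So there is nothing in the paper to compare your proposal against.

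That said, your outline is the standard route to this result and is essentially correct. Constructing $\hat{\phi}$ via the hypercohomology cup product composed with $H^*(\phi)$, and then deducing non-degeneracy from the quasi-isomorphism in Definition~\ref{vdp} together with the global Verdier duality isomorphism~\eqref{verddualthm} (using compactness of $Y$ to identify $Rf_!$ with $Rf_*$, and field coefficients to pass from $R\mathrm{Hom}$ to ordinary $\mathrm{Hom}$ on cohomology), is exactly how this is done in the references the paper cites (e.g.\ Iversen, Kashiwara--Schapira, Dimca). The compatibility check you flag---that the abstract duality isomorphism agrees with the adjoint of the explicit cup-product pairing---is genuine but routine, and your description of it as a diagram chase through the $(\otimes,\uuline{RHom}^\cdot)$ and $(Rf_!,f^!)$ adjunctions is accurate. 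Nothing is missing.
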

Two particular distinguished triangles will be used in this paper.
Let U be an open set in a topological space Y and let $Z$ be its closed complement.  
Let $\uuline{S}^\cdot$ be a complex of sheaves on $Y$.  
Then there are distinguished triangles defined as follows.
\begin{equation}\label{d1} 
\xymatrix{
Ri_*i^!\uuline{S}^\cdot \ar[rr]& &  \uuline{S}^\cdot \ar[dl]\\ 
& Rj_*j^*\uuline{S}^\cdot \ar[ul]^{[1]}\\ }
\end{equation}
\vspace{1cm}
\begin{equation} \label{d2}
\xymatrix{
Rj_!j^*\uuline{S}^\cdot \ar[rr]& &  \uuline{S}^\cdot \ar[dl]\\ 
& Ri_*i^*\uuline{S}^\cdot \ar[ul]^{[1]}\\ }
\end{equation}
These distinguished triangles (\ref{d1}) and (\ref{d2}) induce long exact sequences on cohomology,  
\begin{equation} \label{d1coh}
... \to H^n(Y;Ri_*i^!\uuline{S}^\cdot) \to H^n(Y;\uuline{S}^\cdot) \to H^n(Y;Rj_*j^*\uuline{S}^\cdot) \to H^{n+1}(Y;Ri_*i^!\uuline{S}^\cdot) \to ...
\end{equation}
and
\begin{equation} \label{d2coh}
... \to H^n(Y;Rj_!j^*\uuline{S}^\cdot) \to H^n(Y;\uuline{S}^\cdot) \to H^n(Y;Ri_*i^*\uuline{S}^\cdot) \to H^{n+1}(Y;Rj_!j^*\uuline{S}^\cdot) \to ...
\end{equation}
respectively.  We will require another relation that arises through manipulation of 
distinguished triangles.
\begin{lem}\label{jdualident}
For any complex of sheaves $\uuline{\mathcal{P}}^\cdot$ on the simple stratified space $Y$, 
there is a natural isomorphism $H^k(i^*Rj_*\uuline{\mathcal{P}}^\cdot) \cong H^{k+1}(i^!Rj_!\uuline{\mathcal{P}}^\cdot)$.
\end{lem}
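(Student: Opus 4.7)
The plan is to identify both $Ri_*i^*Rj_*\uuline{\mathcal{P}}^\cdot$ and $Ri_*i^!Rj_!\uuline{\mathcal{P}}^\cdot[1]$ as cones on one and the same morphism $Rj_!\uuline{\mathcal{P}}^\cdot \to Rj_*\uuline{\mathcal{P}}^\cdot$, and then take $H^k$ at the point $y$.

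First, I would specialize the two distinguished triangles (\ref{d1}) and (\ref{d2}) to carefully chosen inputs. Apply (\ref{d2}) with $\uuline{S}^\cdot = Rj_*\uuline{\mathcal{P}}^\cdot$; since $j$ is an open inclusion, the counit $j^*Rj_*\uuline{\mathcal{P}}^\cdot \cong \uuline{\mathcal{P}}^\cdot$ is an isomorphism, so the triangle reduces to
\begin{equation*}
Rj_!\uuline{\mathcal{P}}^\cdot \;\longrightarrow\; Rj_*\uuline{\mathcal{P}}^\cdot \;\longrightarrow\; Ri_*i^*Rj_*\uuline{\mathcal{P}}^\cdot \;\stackrel{[1]}{\longrightarrow}\;.
\end{equation*}
Next, apply (\ref{d1}) with $\uuline{S}^\cdot = Rj_!\uuline{\mathcal{P}}^\cdot$; using the adjunction isomorphism $j^*Rj_!\uuline{\mathcal{P}}^\cdot \cong \uuline{\mathcal{P}}^\cdot$, this triangle becomes
\begin{equation*}
Ri_*i^!Rj_!\uuline{\mathcal{P}}^\cdot \;\longrightarrow\; Rj_!\uuline{\mathcal{P}}^\cdot \;\longrightarrow\; Rj_*\uuline{\mathcal{P}}^\cdot \;\stackrel{[1]}{\longrightarrow}\;.
\end{equation*}

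The second step is to check that the middle arrows $Rj_!\uuline{\mathcal{P}}^\cdot \to Rj_*\uuline{\mathcal{P}}^\cdot$ appearing in the two triangles are the same natural morphism — namely, the one induced by the adjoint pair $(j_!,j^*)$ from the identity $\uuline{\mathcal{P}}^\cdot \to j^*Rj_*\uuline{\mathcal{P}}^\cdot$, equivalently by the adjoint pair $(j^*,j_*)$ from the identity $j^*Rj_!\uuline{\mathcal{P}}^\cdot \to \uuline{\mathcal{P}}^\cdot$. Granted this, the triangulated-category axiom (TR3) applied to the commutative square built from the identity on $Rj_!\uuline{\mathcal{P}}^\cdot$ and on $Rj_*\uuline{\mathcal{P}}^\cdot$ produces an isomorphism of the two cones, i.e.\ an isomorphism
\begin{equation*}
Ri_*i^*Rj_*\uuline{\mathcal{P}}^\cdot \;\cong\; Ri_*i^!Rj_!\uuline{\mathcal{P}}^\cdot[1]
\end{equation*}
in $\mathcal{D}^b_Y$, natural in $\uuline{\mathcal{P}}^\cdot$.

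Finally, since $i$ is the closed inclusion of the single point $y$, the pushforward $Ri_*$ is exact and identifies the stalk at $y$ with the complex itself. Taking $H^k$ of the above isomorphism and using $H^k(Ri_*\uuline{\mathcal{F}}^\cdot) = H^k(\uuline{\mathcal{F}}^\cdot)$ at $y$ yields $H^k(i^*Rj_*\uuline{\mathcal{P}}^\cdot) \cong H^{k+1}(i^!Rj_!\uuline{\mathcal{P}}^\cdot)$, as required.

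The main obstacle is the second step: verifying that the two triangles really are completions of a common canonical morphism rather than two a priori unrelated arrows. The cleanest way I would handle this is to trace both arrows back to the adjunction unit/counit data and show they agree, or alternatively to invoke the octahedral axiom on the composition $Rj_!\uuline{\mathcal{P}}^\cdot \to Rj_*\uuline{\mathcal{P}}^\cdot \to Ri_*i^*Rj_*\uuline{\mathcal{P}}^\cdot$ so that the resulting third triangle exhibits the shifted identification directly; this bypasses any need to argue naturality of the cone by hand.
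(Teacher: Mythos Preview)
Your argument is correct, but the paper takes a more direct route. The paper uses only your second triangle, the one obtained by applying (\ref{d1}) to $Rj_!\uuline{\mathcal{P}}^\cdot$, namely
\[
Ri_*i^!Rj_!\uuline{\mathcal{P}}^\cdot \longrightarrow Rj_!\uuline{\mathcal{P}}^\cdot \longrightarrow Rj_*\uuline{\mathcal{P}}^\cdot \stackrel{[1]}{\longrightarrow},
\]
and then simply applies the functor $i^*$ to this triangle. Since $i^*Ri_* \cong \mathrm{id}$ and $i^*Rj_!\uuline{\mathcal{P}}^\cdot = 0$ (extension by zero), the resulting triangle on the point has a vanishing middle term, which immediately gives the isomorphism $i^*Rj_*\uuline{\mathcal{P}}^\cdot \cong i^!Rj_!\uuline{\mathcal{P}}^\cdot[1]$; naturality is automatic because $i^*$ is a functor of triangulated categories. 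Your approach instead brings in a second triangle from (\ref{d2}) and matches cones, which forces you to confront the well-known subtlety that cones are only unique up to non-canonical isomorphism --- hence your ``main obstacle'' and the proposed detour through the octahedral axiom. That detour works, but the paper's single-triangle-plus-$i^*$ argument sidesteps it entirely.
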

\begin{proof}
Recall that for any complex of sheaves $\uuline{\mathcal{S}}^\cdot$ we have the distinguished triangle (\ref{d1}),
\begin{equation}
\xymatrix{
Ri_*i^!\uuline{\mathcal{S}}^\cdot \ar[rr]& &  \uuline{\mathcal{S}}^\cdot \ar[dl]\\ 
& Rj_*j^*\uuline{\mathcal{S}}^\cdot\ar[ul]^{[1]}\\ }
\end{equation}
Let $\uuline{\mathcal{S}}^\cdot = Rj_!\uuline{\mathcal{P}}^\cdot$ where $\uuline{\mathcal{P}}^\cdot$ is
a complex of sheaves.  Then we have,
\begin{equation} \label{modtri1}
\xymatrix{
Ri_*i^!Rj_!\uuline{\mathcal{P}}^\cdot \ar[rr]& &  Rj_!\uuline{\mathcal{P}}^\cdot \ar[dl]\\ 
& Rj_*j^*Rj_!\uuline{\mathcal{P}}^\cdot\ar[ul]^{[1]}\\ }
\end{equation}
We know that $Rj_*j^*Rj_!\uuline{\mathcal{P}}^\cdot = Rj_*\uuline{\mathcal{P}}^\cdot$.  Substituting into
eqn. (\ref{modtri1}) above we have
\begin{equation} \label{modtri2}
\xymatrix{
Ri_*i^!Rj_!\uuline{\mathcal{P}}^\cdot \ar[rr]& &  Rj_!\uuline{\mathcal{P}}^\cdot \ar[dl]\\ 
& Rj_*\uuline{\mathcal{P}}^\cdot\ar[ul]^{[1]}\\ }
\end{equation}
Applying $i^*$ to eqn. (\ref{modtri2}) we obtain,
\begin{equation} \label{modtri3}
\xymatrix{
i^*Ri_*i^!Rj_!\uuline{\mathcal{P}}^\cdot \ar[rr]& &  i^*Rj_!\uuline{\mathcal{P}}^\cdot \ar[dl]\\ 
& i^*Rj_*\uuline{\mathcal{P}}^\cdot\ar[ul]^{[1]}\\ }
\end{equation}
This simplifies to, 
\begin{equation} \label{modtri4}
\xymatrix{
i^!Rj_!\uuline{\mathcal{P}}^\cdot \ar[rr]& &  i^*Rj_!\uuline{\mathcal{P}}^\cdot \ar[dl]\\ 
& i^*Rj_*\uuline{\mathcal{P}}^\cdot\ar[ul]^{[1]}\\ }
\end{equation}
Since $Rj_!$ is the extension by zero functor, it follows that $i^*Rj_!\uuline{\mathcal{P}}^\cdot = 0$. Hence,
\begin{equation} \label{modtri5}
\xymatrix{
i^*Rj_*\uuline{\mathcal{P}}^\cdot \ar[r]^{[1]}  & i^!Rj_!\uuline{\mathcal{P}}^\cdot\\
}
\end{equation}
is an isomorphism in all degrees $k > 0$.  Applying $H^*$ to eqn. (\ref{modtri5}) we get,
\begin{equation} \label{jdlidval}
H^{k}(i^*Rj_*\uuline{\mathcal{P}}^\cdot) \cong H^{k+1} (i^!Rj_!\uuline{\mathcal{P}}^\cdot)
\end{equation}
for all $k >0$.
\end{proof}
\begin{lem} \label{cohvalues}
Let $Y$ be a simple stratified space and let $\uuline{\mathcal{P}}^\cdot$ be a complex of sheaves on Y.  
Let $y \in Y$ be the singular point and let $U_{y}$ be a distinguished neighborhood of $y$ so that
$\bar{U}_{y} \simeq cL_{y}$ and $U_{y} \simeq c^oL_{y}$.
Then there are natural isomorphisms
\begin{enumerate}
\item
$H^m(Y;i_*i^*\uuline{\mathcal{P}}^\cdot) \cong H^m(Y^o;\uuline{\mathcal{P}}^\cdot)$, $\forall $ $m > 0$,
\item
$H^m(Y;i_!i^*\uuline{\mathcal{P}}^\cdot) \cong H^m_c(Y^o;\uuline{\mathcal{P}}^\cdot)$, $\forall $ $m > 0$,
\item
$H^m(Y;j_*j^*\uuline{\mathcal{P}}^\cdot) \cong H^m(U_y;\uuline{\mathcal{P}}^\cdot)$, $\forall $ $m > 0$,
\item
$H^m(Y;j_*j^!\uuline{\mathcal{P}}^\cdot) \cong H^m_c(U_y;\uuline{\mathcal{P}}^\cdot)$, $\forall $ $m > 0$,
\item 
$H^m(Y;j^*i_*\uuline{\mathcal{P}}^\cdot) \cong H^m(L \times (0,1);\uuline{\mathcal{P}}^\cdot)$, $\forall $ 
$m > 0$ where $L$ is the link of the singular point $y$.
\end{enumerate}
\end{lem}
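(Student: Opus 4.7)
The plan is to verify each of the five cohomological identifications by one of two standard reductions, followed by a computation using the cone structure $\bar{U}_y \simeq cL$ around the singular point. For the items involving the open inclusion $j:Y^o \hookrightarrow Y$, I would invoke the identifications $R\Gamma(Y, Rj_* -) = R\Gamma(Y^o, -)$ and $R\Gamma(Y, Rj_! -) = R\Gamma_c(Y^o, -)$, the latter following from the section--support definition of $Rj_!$ recalled in the preliminaries; this handles the isomorphisms whose right-hand side is a cohomology or compactly supported cohomology group of $Y^o$.

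For the items involving the closed inclusion $i:\{y\}\hookrightarrow Y$, the global cohomology $H^m(Y;Ri_*\uuline{F}^\cdot)$ collapses to the cohomology of $\uuline{F}^\cdot$ over the point, i.e.\ to the stalk of the complex at $y$. Constructibility of $\uuline{\mathcal P}^\cdot$ then lets me replace the direct limit of cohomologies over the neighborhood basis of $y$ by the cohomology of a single sufficiently small distinguished $U_y$. This yields $H^m(U_y;\uuline{\mathcal P}^\cdot)$ from the stalk of $i^*\uuline{\mathcal P}^\cdot$, and $\varinjlim_{U\ni y}H^m(U-\{y\};\uuline{\mathcal P}^\cdot) = H^m(U_y-\{y\};\uuline{\mathcal P}^\cdot) = H^m(L\times(0,1);\uuline{\mathcal P}^\cdot)$ from the stalk of $i^*Rj_*\uuline{\mathcal P}^\cdot$, using the homeomorphism $c^oL - \{\ast\} \simeq L\times(0,1)$.

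The main obstacle is the identification whose right-hand side is $H^m_c(U_y;\uuline{\mathcal P}^\cdot)$, since this requires computing the stalk of the exceptional inverse image $i^!$. My plan here is to use Verdier duality: applying the Verdier duality theorem (\ref{verddualthm}) to the map $f:U_y \to \{\mathrm{pt}\}$, together with Definition \ref{dualshf} of the dualizing sheaf and the non-degenerate pairing of Proposition \ref{indmap}, pairs local cohomology with supports at $y$ against compactly supported cohomology on $U_y$. The cone structure $\bar{U}_y \simeq cL$ ensures $U_y$ is sufficiently well-behaved for this pairing to yield the required isomorphism, and the restriction to $m>0$ sidesteps low-degree subtleties where the inductive limit does not detect contributions in degree zero.
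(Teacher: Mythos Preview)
The paper does not actually prove this lemma: immediately after the statement it simply remarks that ``the proof of the results in Lemma~\ref{cohvalues} follow in a straight forward manner from sheaf theory.'' Your outline is precisely the standard sheaf-theoretic unpacking that remark points to, so in substance you are doing what the paper intends.

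One caution about bookkeeping. The paper's stated convention in Section~\ref{conv} is $j:Y^o\hookrightarrow Y$ open and $i:\{y\}\hookrightarrow Y$ closed, and that is the convention you follow in your plan. However, the lemma as written uses the letters the other way around: items (1)--(2), whose right-hand sides are $H^m(Y^o)$ and $H^m_c(Y^o)$, carry the symbol $i$, while items (3)--(5), whose right-hand sides involve $U_y$ and $L\times(0,1)$, carry the symbol $j$. (This reversal is made explicit in the later Lemma~\ref{cohvalues2}, where the authors write $i:Y^o\hookrightarrow Y$ and $\hat j:\Sigma\hookrightarrow Y$.) Your mathematical reductions are the right ones---$R\Gamma(Y,R(\text{open})_*-)=R\Gamma(Y^o,-)$, $R\Gamma(Y,R(\text{open})_!-)=R\Gamma_c(Y^o,-)$, stalk computations at $y$ via constructibility for the closed-inclusion items, and the identification of the punctured cone with $L\times(0,1)$---but when you write up the proof you should match the symbols to the lemma's own (reversed) usage, or else note the discrepancy explicitly. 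Apart from this labeling issue, your plan is sound and is exactly the argument the paper is gesturing at.
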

\noindent
Remark: The proof of the results in Lemma \ref{cohvalues} follow in a straight forward manner from sheaf theory.
For the remainder of this paper, $L$ and $L_y$ will be used interchangeably to denote
the link of the point $y \in Y$. 
The derived category, quasi-isomorphisms, right derived functors and related topics are taken from the references
~\cite{dimca:shv}, ~\cite{gelfman:homalg}, and ~\cite{iver:cohsheav}.
In the next section we present the motivation for constructing the perverse sheaf $\uuline{\mathcal{S}_0}^\cdot$.
\subsection{Mathematical Approach}
The objective of this paper is to construct the minimal object $\uuline{\mathcal{S}_0}^\cdot$ using the 
technique of MacPherson and Vilonen (Thm. 2.1 in ref. ~\cite{mv:elem}), show it provides the necessary cohomology in all degrees, discuss its properties, and then provide some qualitative insight to applications to String theory.
The cohomology requirement for the middle dimension as stated in the introduction can be expressed mathematically as follows.  Let Y be a simple stratified space.
We seek a minimal object $\uuline{\mathcal{S}_0}^\cdot$ such that $H^n(Y;\uuline{\mathcal{S}_0}^\cdot)$ fits 
into the following diagram.  
\begin{equation} \label{basicphys}
\xymatrix{
& H^n(Y;\uuline{\mathcal{S}_0}^\cdot) \ar@{->>}[dr]^d \\
H^n(Y;\uuline{\mathbb{Q}}) \ar@{^{(}->}[ur]^c  \ar[rr]^\gamma  & & H^{n}(Y - y ;\uuline{\mathbb{Q}})}
\end{equation}
where $c$ is an injection, $d$ is a surjection, and $\gamma$ is the restriction map on cohomology.
This minimal object $\uuline{\mathcal{S}_0}^\cdot$ would have cohomology greater than either $H_n(Y-y)$ or $H_n(Y)$ for $k$,
but $H^k(Y;\uuline{\mathcal{S}_0}^\cdot) \cong H^k(Y;\uuline{IC}^\cdot)$ in all other degrees.
The existence and construction of such an $\uuline{\mathcal{S}_0}^\cdot$ that yields the desired cohomology
in the case of a singular isolated point is the subject of this paper.  
In order to construct $\uuline{\mathcal{S}_0}^\cdot$ on $Y$, we will use the method developed by 
MacPherson and Vilonen presented in~\cite{mv:elem}.
In this technique, a perverse sheaf will be
constructed from datum on the non-singular part, $Y^o$ of Y.  Although~\cite{mv:elem} describes 
how to construct perverse sheaves on stratified spaces with higher dimensional strata and 
non-trivial local systems defined on $Y^o$, we have adapted this technique for 
the case of a space $Y$ that has a 'simple stratification' made up of two parts: the singular point $y \in Y$
and the smooth $Y^o$.  In addition, the constant local system defined on $Y^o$ will be used in all cases. 
In the next section, the category of perverse sheaves and the zig-zag category will be presented.
\begin{section}{Perverse Sheaves and the Zig-Zag Category}
In this section we will present a condensed discussion of the category of perverse sheaves and the Zig-zag category.
Throughout this paper we restrict ourselves to conventions as presented in section \ref{conv}.
%
%
\begin{subsection}{The Category of Perverse Sheaves} \label{catperv}
\begin{defn} \label{defnpervshf}
The category of perverse sheaves $\mathbb{P}(Y)$ is the full sub-category of  $\mathcal{D}^b(Y)$ whose 
objects are complexes of sheaves $\uuline{\mathcal{S}}^\cdot$ which 
satisfy the following properties:
\begin{enumerate}
\item
There exists $M>0 \in \mathbb{Z}$ such that $\uuline{\mathcal{S}}^i=0$ $\forall$ $i < M$ (bounded below)
\item
The complex of sheaves $j^*\uuline{\mathcal{S}}^\cdot$ is quasi-isomorphic to a local system on $Y^o$ (in degree 0).
In other words,
\begin{enumerate}
\item
$\uuline{H}^k(j^*\uuline{\mathcal{S}}^\cdot) =0$ if $k \neq 0$
\item
$\uuline{H}^0(j^*\uuline{\mathcal{S}}^\cdot)$ is a local system
\end{enumerate}
\item \label{spt}
$H^k(i^*\uuline{\mathcal{S}}^\cdot)=0 \text{ for } k>n \text{ (support)}$ 
\item \label{cspt}
$H^k(i^!\uuline{\mathcal{S}}^\cdot)=0 \text{ for } k<n \text{ (cosupport)}$ 
\end{enumerate}
\end{defn}
\noindent
Remark:
We will be primarily interested in the sub-category $\mathbb{P}_\mathbb{Q}(Y)$ of $\mathbb{P}(Y)$ 
that consists of perverse sheaves $\uuline{\mathcal{S}^\cdot}$ where the local system $\uuline{H}^0(j^*\uuline{\mathcal{S}}^\cdot)$
is the constant sheaf $\uuline{\mathbb{Q}}$.  Let $\mathbb{P}(Y^o)$ represent the category of perverse 
sheaves on $Y^o$. This is equivalent to the category of local systems defined on $Y^o$.
\end{subsection} 
%
%
\begin{subsection}{Zig-Zag Category}
\begin{subsubsection}{Definitions and Properties}
We have modified the definition of the Zig-Zag category $Z(Y,y)$ as taken from  p. 409 in Ref.~\cite{mv:elem}.
\begin{defn}{($Z(Y,y)$)} \label{defnzzcat}
 An object in $Z(Y,y)$ is a
sextuple $\Theta = (\mathcal{L},K,C,\alpha,\beta,\gamma)$ where $\mathcal{L} \in \mathbb{P}(Y^o)$, 
and $K$ and $C$ are vector spaces together with an exact sequence:
\begin{equation} \label{zzobj}
\begin{CD}
H^{n-1}(i^*j_* \mathcal{L}) @>\alpha>> K @>\beta>> C @>\gamma>> H^n(i^*j_* \mathcal{L})
\end{CD}
\end{equation}
  
Let $\Theta'=(\mathcal{L}',K',C'\alpha',\beta',\gamma') \in Obj(Z(Y,y))$.  
A morphism $\theta:\Theta \to \Theta'$ in $Z(Y,y)$ consists of
maps $\mathcal{L} \mapsto \mathcal{L}'$, $K \mapsto K'$ and $C \mapsto C'$
such that the following diagram commutes,
\begin{equation} \label{zzmor}
\begin{CD}
H^{n-1}(i^*j_* \mathcal{L}) @>\alpha>> K @>\beta>> C @>\gamma>> H^n(i^*j_* \mathcal{L}) \\ 
@VVV				@VVV	@VVV	@VVV\\
H^{n-1}(i^*j_* \mathcal{L}') @>\alpha'>> K'@>\beta'>>  C' @>\gamma'>> H^n(i^*j_* \mathcal{L}')
\end{CD}
\end{equation}
\end{defn}
\noindent
Remark: 
We will be primarily interested in the full sub-category $Z_\mathbb{Q}(Y,y)$ of $Z(Y,y)$ that 
consists of zig-zag objects $\Theta$ where the local system $\mathcal{L}$ is the constant sheaf $\uuline{\mathbb{Q}}$.
\begin{defn} \label{zigzagfunct}
The zig-zag functor $\mu:\mathbb{P}(Y) \to Z(Y,y)$ is defined by sending an object 
$\uuline{Q^\cdot} \in \mathbb{P}(Y)$ to the triple 
($j^* \uuline{Q^\cdot}$, $H^n(i^!\uuline{Q^\cdot})$, $H^n(i^*\uuline{Q^\cdot})$) 
together with the exact sequence
\begin{equation} \label{zzfunc}
\begin{CD}
H^{n-1}(i^*j_* j^*\uuline{Q^\cdot}) @>>> H^n(i^!\uuline{Q^\cdot}) @>>> H^n(i^*\uuline{Q^\cdot}) @>>> H^n(i^*j_* j^*\uuline{Q^\cdot}) 
\end{CD}
\end{equation}
\end{defn}
\end{subsubsection}
\begin{subsubsection}{Relationship to $\mathbb{P}(Y)$}
We will use Theorem 2.1 from MacPherson and Vilonen~\cite{mv:elem} in the proof of the main result.  It 
is stated below in a modified form.
\begin{thm} (MacPherson-Vilonen~\cite{mv:elem}) \label{MV}
\begin{enumerate}
\item 
The zig-zag functor $\mu:\mathbb{P}(Y) \to Z(Y,y)$ gives rise to a bijection from isomorphism classes of objects of $\mathbb{P}(Y)$ to isomorphism classes of 
objects of $Z(Y,y)$,
\item
Given $\uuline{\mathcal{S}},\uuline{\mathcal{S}'} \in \mathbb{P}(Y)$.  
Then $\mu:Hom_{\mathbb{P}}(\uuline{\mathcal{S}},\uuline{\mathcal{S}'}) \to Hom_{Z}(\mu(\uuline{\mathcal{S}}),\mu(\uuline{\mathcal{S}'}))$ is a surjection.
\end{enumerate}
\end{thm}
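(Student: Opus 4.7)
The plan is to exhibit an explicit reconstruction sending each zig-zag object back to a perverse sheaf, show this reconstruction is quasi-inverse to $\mu$ on isomorphism classes, and use the same construction to lift compatible morphisms. I would address the Hom-surjection in claim (2) in tandem with the surjectivity half of claim (1), since the two are built out of the same reconstruction.

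Given $\Theta = (\mathcal{L},K,C,\alpha,\beta,\gamma) \in Z(Y,y)$, I would produce $\uuline{\mathcal{S}}^\cdot \in \mathbb{P}(Y)$ as the fibre (shifted mapping cone) of a suitable morphism $Rj_*\mathcal{L} \to i_* \uuline{\mathcal{K}}^\cdot$ in $\mathcal{D}^b_Y$, where $\uuline{\mathcal{K}}^\cdot$ is a two-term skyscraper complex at $y$ with $C$ in degree $n$ and a correction piece in degree $n-1$ dictated by $K$ and $\alpha$. Applying $i^*$ and $i^!$ to the defining triangle and passing to cohomology, the exactness of \eqref{zzobj} is exactly what translates into $H^k(i^*\uuline{\mathcal{S}}^\cdot) = 0$ for $k>n$ and $H^k(i^!\uuline{\mathcal{S}}^\cdot)=0$ for $k<n$, i.e.\ the support and cosupport axioms of Definition \ref{defnpervshf}. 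By construction $j^*\uuline{\mathcal{S}}^\cdot \cong \mathcal{L}$, $H^n(i^!\uuline{\mathcal{S}}^\cdot) \cong K$, and $H^n(i^*\uuline{\mathcal{S}}^\cdot) \cong C$ with the correct connecting arrows, so $\mu(\uuline{\mathcal{S}}^\cdot) \cong \Theta$.

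For claim (2), given $\uuline{\mathcal{S}}^\cdot, \uuline{\mathcal{S}'}^\cdot \in \mathbb{P}(Y)$ and a zig-zag morphism $\theta: \mu(\uuline{\mathcal{S}}^\cdot) \to \mu(\uuline{\mathcal{S}'}^\cdot)$, I would present both sheaves as fibres of the above form (which is legitimate by the surjectivity just established). The commutativity of \eqref{zzmor} furnishes compatible morphisms $Rj_*\mathcal{L} \to Rj_*\mathcal{L}'$ and $i_*\uuline{\mathcal{K}}^\cdot \to i_*\uuline{\mathcal{K}'}^\cdot$ intertwining the two defining maps, and functoriality of the fibre produces a lift $\tilde\theta: \uuline{\mathcal{S}}^\cdot \to \uuline{\mathcal{S}'}^\cdot$ with $\mu(\tilde\theta) = \theta$. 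For injectivity on isomorphism classes in (1), if $\mu(\uuline{\mathcal{S}}^\cdot) \cong \mu(\uuline{\mathcal{S}'}^\cdot)$ then the lift $\tilde f$ provided by (2) is an isomorphism on $j^*$, $H^n(i^!\,\cdot\,)$, and $H^n(i^*\,\cdot\,)$ simultaneously; the perversity conditions then force the remaining $H^k(i^!\tilde f)$ and $H^k(i^*\tilde f)$ to be isomorphisms as well, and a five-lemma chase on the long exact sequences induced by triangles \eqref{d1} and \eqref{d2} upgrades $\tilde f$ to a quasi-isomorphism.

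The main obstacle is pinning down the morphism $Rj_*\mathcal{L} \to i_*\uuline{\mathcal{K}}^\cdot$ in $\mathcal{D}^b_Y$ whose associated long exact sequence on $i^*$ recovers the prescribed zig-zag sequence \eqref{zzobj}. Realizing $\alpha,\beta,\gamma$ as actual chain-level data, rather than as abstract connecting homomorphisms, is the technical heart of the MacPherson--Vilonen argument; it is made tractable here by the simple stratified hypothesis of Definition \ref{strat}, which collapses all of the relevant information at $y$ into the two adjacent degrees $n-1$ and $n$. Verifying that the resulting fibre truly lands in $\mathbb{P}(Y)$ (and not merely in $\mathcal{D}^b_Y$) and that its zig-zag triple with connecting maps matches $\Theta$ on the nose is the step demanding the most bookkeeping.
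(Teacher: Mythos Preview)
The paper does not prove this theorem. It is quoted (in a form adapted to the simple stratified setting) from MacPherson--Vilonen~\cite{mv:elem} and used as a black box throughout; there is no argument in the paper to compare your proposal against.

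As a sketch of how the cited result is established, your outline is in the right spirit: one reconstructs a perverse sheaf from a zig-zag datum by forming a cone on a map from $Rj_*\mathcal{L}$ into a skyscraper complex at $y$, and the exactness of the zig-zag sequence is precisely what enforces the support and cosupport conditions. One caution on your wording: you invoke ``functoriality of the fibre'' to produce the lift in part~(2), but cones in a triangulated category are \emph{not} functorial. What you actually use is the completion axiom TR3, which guarantees that a compatible pair of maps on two vertices of a triangle extends to a map of triangles---but not uniquely. This non-uniqueness is exactly why $\mu$ is only a surjection on $\mathrm{Hom}$ and not a bijection, so the distinction matters here. Your injectivity-on-isomorphism-classes argument via the five lemma is correct in outline.
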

\noindent
Hence an isomorphism class of objects in $\mathbb{P}(Y)$ is determined by a local system (in this case the constant local system) on $Y^o$ and a fixed $K$ and $C$.  Different choices of $K$ and $C$ lead to different perverse sheaves on $Y$.  The main result of this work involves constructing a certain perverse sheaf on $Y$ from a certain choice of $K$ and $C$, as described in the following proposition.
\begin{prop} \label{defnKC}
Let $\Theta_0=(\uuline{\mathbb{Q}},K_0,C_0,\alpha_0,\beta_0,\gamma_0)$ 
where $K_0=Im(H^n_c(c^o L) \to H^n_c(Y^o))$, $C_0=Im(H^n(Y^0) \to H^{n+1}_c(c^o L))$.  Let 
$\alpha_0:H^n_c(c^o L) \to Im(H^n_c(c^o L) \to H^n_c(Y^o))$.  Let $\beta_0$ be the 0-map.  Let
$\gamma_0:Im(H^n(Y^0) \to H^{n+1}_c(c^o L)) \to H^{n+1}_c(c^o L)$.
Then $\Theta_0 \in Obj(Z_\mathbb{Q}(Y,y))$.  Therefore there exists $\uuline{\mathcal{S}_0} \in \mathbb{P}_\mathbb{Q}(Y)$ such that $\mu(\uuline{\mathcal{S}_0}) = \Theta_0$. 
\end{prop}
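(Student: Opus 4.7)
\begin{pf}[Proof proposal]
The plan is to verify directly that the sextuple $\Theta_0$ satisfies the exactness condition \eqref{zzobj} required to lie in $Z_\mathbb{Q}(Y,y)$, and then invoke Theorem \ref{MV} to conclude the existence of $\uuline{\mathcal{S}_0} \in \mathbb{P}_\mathbb{Q}(Y)$ with $\mu(\uuline{\mathcal{S}_0}) = \Theta_0$.

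First I would reinterpret the groups $H^{n-1}(i^*j_*\uuline{\mathbb{Q}})$ and $H^n(i^*j_*\uuline{\mathbb{Q}})$ at the two ends of the required exact sequence. Since $Y^o$ looks like $L \times (0,1)$ in a punctured neighborhood of $y$, the stalk at $y$ of $j_*\uuline{\mathbb{Q}}$ computes the cohomology of the link; combined with the long exact sequence for the pair $(cL,L)$ applied to the contractible closed cone, this yields canonical isomorphisms
\begin{equation}
H^{k}(i^*j_*\uuline{\mathbb{Q}}) \;\cong\; H^{k}(L) \;\cong\; H^{k+1}_c(c^oL), \qquad k \geq 1.
\end{equation}
These are the identifications under which the maps $\alpha_0$ and $\gamma_0$ described in the statement acquire the correct source and target to fit into \eqref{zzobj}.

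Next I would check well-definedness and the key factorizations. Under the identification above, $\alpha_0$ becomes the composition $H^{n-1}(i^*j_*\uuline{\mathbb{Q}}) \cong H^n_c(c^oL) \twoheadrightarrow K_0$, which is a surjection by the very definition of $K_0$ as the image of $H^n_c(c^oL) \to H^n_c(Y^o)$. Dually, $\gamma_0$ becomes the inclusion $C_0 \hookrightarrow H^{n+1}_c(c^oL) \cong H^n(i^*j_*\uuline{\mathbb{Q}})$, which is an injection by the definition of $C_0$ as an image inside $H^{n+1}_c(c^oL)$. With $\beta_0 = 0$, exactness of \eqref{zzobj} is then immediate: exactness at $K_0$ reduces to surjectivity of $\alpha_0$, and exactness at $C_0$ reduces to injectivity of $\gamma_0$.

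Finally, having shown $\Theta_0 \in \mathrm{Obj}(Z_\mathbb{Q}(Y,y))$, the first statement of Theorem \ref{MV} provides an isomorphism class of perverse sheaves $\uuline{\mathcal{S}_0} \in \mathbb{P}_\mathbb{Q}(Y)$ with $\mu(\uuline{\mathcal{S}_0}) = \Theta_0$, completing the proof. The main potential obstacle is the first step: pinning down the natural isomorphisms between the stalk cohomology $H^*(i^*j_*\uuline{\mathbb{Q}})$, the link cohomology $H^*(L)$, and the compactly supported cone cohomology $H^{*+1}_c(c^oL)$, and ensuring that the maps $\alpha_0$ and $\gamma_0$ as defined in the statement agree with the morphisms induced from the triangles \eqref{d1}--\eqref{d2} applied to $j_*\uuline{\mathbb{Q}}$. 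Once this bookkeeping is in place, the exactness check is essentially free because $\beta_0 = 0$ builds the entire sequence out of a surjection followed by an injection.
\end{pf}
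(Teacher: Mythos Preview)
Your proposal is correct and follows essentially the same approach as the paper: identify the end terms $H^{n-1}(i^*j_*\uuline{\mathbb Q})$ and $H^{n}(i^*j_*\uuline{\mathbb Q})$ with $H^n_c(c^oL)$ and $H^{n+1}_c(c^oL)$, observe that $\alpha_0$ is a surjection and $\gamma_0$ an injection by the very definition of $K_0$ and $C_0$ as images, and then note that with $\beta_0=0$ exactness is automatic, after which Theorem~\ref{MV} produces $\uuline{\mathcal{S}_0}$. The paper additionally motivates these identifications by writing out the long exact sequence coming from the distinguished triangle~\eqref{d1} on $\uuline{\mathbb Q}$, but the logical content of the argument is the same as yours.
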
  
\begin{proof}
In order to have an object of $Z_\mathbb{Q}(Y,y)$, we must specify $K_0$, $C_0$, the maps between them and 
then show eqn. (\ref{zzobj}) is exact. Rewriting eqn. (\ref{zzobj}) yields,
\begin{equation} \label{seqkczero}
\begin{CD}
H^n_c(c^oL) @>\alpha_0>> K_0 @>\beta_0>> C_0 @>\gamma_0>> H^{n+1}_c(c^oL)
\end{CD}
\end{equation}
Recall the distinguished triangle in eqn. (\ref{d1}) applied to $Y$ with coefficients in $\uuline{\mathbb{Q}}$.
Simplifying in degree n we have,
\begin{equation} \label{cohY}
\begin{CD}
@>>> H^n_c(c^oL) @>>> H^n_c(Y^o) @>>> H^n(Y^o) @>>> H^{n+1}_c(c^oL) @>>>
\end{CD}
\end{equation}
Comparing eqns. (\ref{seqkczero}) and (\ref{cohY}) can be identified as 
$\alpha_0: H^n_c(c^oL;\uuline{\mathbb{Q}}) \to K_0$ a surjection, $\gamma_0:C_0 \to H^{n+1}_c(c^oL;\uuline{\mathbb{Q}})$ 
an injection which are canonical maps that follow by definition.  
Since eqn. (\ref{cohY}) is exact with $K_0 \subset H^n_c(Y;\uuline{\mathbb{Q}})$ and
$C_0 \subset H^{n+1}_c(c^oL;\uuline{\mathbb{Q}})$, $\beta_0:K_0 \to C_0$ is the zero map since moving across two elements of an exact sequence is equivalent to applying $d \circ d =0$.
All that remains is to show that eqn. (\ref{seqkczero}) is exact. Now, since $\beta_0$ is the 
zero map it follows that 
$Ker(\beta_0)=Im(\beta_0)=0$. Since $\alpha_0$ is a surjection, $Ker(\beta_0)=Im(\alpha_0)=0$.  
Similarly since $\gamma_0$ is an injection,
$Ker(\gamma_0)=Im(\beta_0)=0$, hence (\ref{seqkczero}) is exact and $\Theta_0 \in Obj(Z_\mathbb{Q}(Y,y))$.
Since, $\Theta_0 \in Obj(Z_\mathbb{Q}(Y,y))$, it follows by Theorem \ref{MV}
there exists $\uuline{\mathcal{S}_0} \in \mathbb{P}_\mathbb{Q}(Y)$ such that $\mu(\uuline{\mathcal{S}_0}) = \Theta_0$.
\end{proof}
\end{subsubsection}
\end{subsection}
\end{section}
\section{Main Result}
In this section we state the main result and discuss its proof.
\begin{thm}\label{mainthm}
The perverse sheaf $\uuline{\mathcal{S}_0}$ has the following properties:
\begin{enumerate}
\item
$H^i(Y;\uuline{\mathcal{S}_0})=
\begin{cases}
H^i(Y),		 &\text{i $>$ n},\\
H^i(Y^o),	 &\text{i $<$ n}
\end{cases}
$
\item
$H^n(Y;\uuline{\mathcal{S}_0})$ is specified by the following two canonical short exact sequences:
\begin{enumerate}
\item
$0 \to K_0 \to  H^n(Y;\uuline{\mathcal{S}_0}) \to H^n(Y^o) \to 0$
\item
$0 \to H^n_c(Y^o) \to H^n(Y;\uuline{\mathcal{S}_0}) \to C_0 \to 0$
\end{enumerate}
\item
$\uuline{\mathcal{S}_0}$ is self-dual. 
\end{enumerate}
\end{thm}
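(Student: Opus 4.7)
The proof splits naturally into the three items, and I would treat items 1--2 together using long exact sequences while reserving a separate argument for item 3. My plan for items 1 and 2 is to work entirely within the long exact sequences (\ref{d1coh}) and (\ref{d2coh}), using the zigzag data to control the local contributions at $y$. The zigzag functor $\mu$ gives $H^n(i^!\uuline{\mathcal{S}_0}) = K_0$ and $H^n(i^*\uuline{\mathcal{S}_0}) = C_0$, while the perverse support/cosupport axioms force $H^k(i^*\uuline{\mathcal{S}_0}) = 0$ for $k > n$ and $H^k(i^!\uuline{\mathcal{S}_0}) = 0$ for $k < n$. Feeding these into (\ref{d1coh}) yields $H^i(Y;\uuline{\mathcal{S}_0}) \cong H^i(Y^o)$ for $i < n-1$ by degree counting; symmetrically, (\ref{d2coh}) yields $H^i(Y;\uuline{\mathcal{S}_0}) \cong H^i_c(Y^o) \cong H^i(Y)$ for $i > n+1$, where the last step follows from the LES of the pair $(Y,\{y\})$ and compactness of $Y$.

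The boundary cases $i \in \{n-1,n,n+1\}$ and the two short exact sequences in item 2 all reduce to the vanishing of a pair of connecting homomorphisms. The key one, for (2b), is the map $C_0 = H^n(i^*\uuline{\mathcal{S}_0}) \to H^{n+1}_c(Y^o)$; I would show it factors via $\gamma_0$ as $C_0 \hookrightarrow H^{n+1}_c(c^oL) \to H^{n+1}_c(Y^o)$, and this composition vanishes because $C_0 = \mathrm{Im}(H^n(Y^o) \to H^{n+1}_c(c^oL))$ is precisely the kernel of $H^{n+1}_c(c^oL) \to H^{n+1}_c(Y^o)$ in the compactly supported LES of the inclusion $c^oL \hookrightarrow Y^o$. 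A mirror argument applied to (\ref{d1coh}), using $K_0 = \mathrm{Ker}(H^n_c(Y^o) \to H^n(Y^o))$, produces (2a) and the $i = n-1$ case.

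For item 3 I would invoke Theorem~\ref{MV}: to prove $\uuline{\mathcal{S}_0} \cong \mathcal{D}_V(\uuline{\mathcal{S}_0})[-2n]$ it suffices to exhibit an isomorphism of their zigzags in $Z_{\mathbb{Q}}(Y,y)$. Using the standard Verdier interchanges $i^!\mathcal{D}_V \cong \mathcal{D}_V i^*$, $i^*\mathcal{D}_V \cong \mathcal{D}_V i^!$, and $j^*\mathcal{D}_V \cong \mathcal{D}_V j^*$, together with $\mathcal{D}_V(\uuline{\mathbb{Q}}_{Y^o}) \cong \uuline{\mathbb{Q}}_{Y^o}[2n]$ on the smooth real $2n$-manifold $Y^o$, the zigzag of $\mathcal{D}_V(\uuline{\mathcal{S}_0})[-2n]$ takes the form $(\uuline{\mathbb{Q}}_{Y^o},\, C_0^*,\, K_0^*)$ with dualized maps. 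Matching it with $\Theta_0$ then reduces to producing a canonical perfect pairing $K_0 \otimes C_0 \to \mathbb{Q}$ intertwining $\alpha_0$ and $\gamma_0$ with each other's duals. I would construct this pairing by applying Poincar\'e--Lefschetz duality on $Y^o$ and on the cone $c^oL$ to the self-dual exact sequence
\[ H^n_c(c^oL) \to H^n_c(Y^o) \to H^n(Y^o) \to H^{n+1}_c(c^oL), \]
in which $K_0$ appears as the image of the first arrow and $C_0$ as the image of the third.

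The LES chasing in items 1 and 2 is routine once the two connecting maps above are identified with their geometric counterparts, so the real obstacle lies in item 3: the Poincar\'e--Lefschetz isomorphism $K_0 \cong C_0^*$ must be shown compatible not merely as vector spaces but with the entire zigzag data $(\alpha_0, \beta_0, \gamma_0)$ and their Verdier duals. This requires a careful identification of each connecting map appearing in (\ref{d1coh}) and (\ref{d2coh}) with the abstractly specified MV maps coming out of Theorem~\ref{MV}, together with the usual tracking of signs and shifts under Verdier duality.
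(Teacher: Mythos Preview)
Your proposal is correct and follows essentially the same route as the paper: parts 1--2 via the long exact sequences (\ref{d1coh}) and (\ref{d2coh}) together with the support/cosupport axioms and the zigzag identifications of $K_0,C_0$, and part 3 by showing $\Theta_0 \cong \mathcal{D}_Z(\Theta_0)$ in $Z_{\mathbb{Q}}(Y,y)$ (the paper's Lemma~\ref{dualimg} is exactly your ``perfect pairing $K_0\otimes C_0\to\mathbb{Q}$ from the self-dual four-term sequence'') and then invoking Theorem~\ref{MV}. One small imprecision: $c^oL$ is not literally contained in $Y^o$ since it contains $y$; the inclusion you want is $c^oL\setminus\{y\}\cong L\times(0,1)\hookrightarrow Y^o$, under which $H^{n+1}_c(c^oL)\cong H^n(L)\cong H^{n+1}_c(L\times(0,1))$ and your factorization argument goes through.
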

\noindent
Remark: The two short exact sequences in (2) are equivalent to the diagram 
\begin{equation} \label{origtri}
\xymatrix{
& H^n(Y;\uuline{\mathcal{S}_0}) \ar@{->>}[dr]^d \\
H^n(Y;\uuline{\mathbb{Q}}) \ar@{^{(}->}[ur]^c  \ar[rr]^\gamma  & & H^{n}(Y;j_*j^*\uuline{\mathbb{Q}})}
\end{equation}
\noindent
where $c$ is an injection, $d$ is a surjection, and $\gamma$ is the map that comes from 
the distinguished triangle between these elements.
\begin{proof}
(Parts 1 and 2)\\
Recall that $H^n(Y;\uuline{\mathcal{S}_0})$ fits into the long exact sequence generated by applying the distinguished 
triangle (\ref{d2}) to $Y$ on $\uuline{\mathbb{Q}}$.  We will prove there is a diagram
\begin{equation} \label{d3}
\xymatrix{
& & & &  H^n(Y;\mathcal{S}_0) \ar@{->>}[d]^d \\ 
  & H^{n-1}(j_*j^*\mathbb{Q})\ar[r]^\alpha & H^{n}(i_*i^!\mathbb{Q})\ar[r]^\beta \ar@{->>}[d]^a  
	&  H^n(\mathbb{Q}) \ar@{^{(}->}[ur]^c \ar[r]^\gamma
    & H^{n}(j_*j^*\mathbb{Q}) \ar[r]^\delta \ar@{->>}[d]^e & H^{n+1}(i_*i^!\mathbb{Q}) \\
	& & K_0=Im(\beta) \ar@{^{(}->}[ur]^b  & & C_0=Im(\delta) \ar@{^{(}->}[ur]^f }
\end{equation}
We will construct the maps for this triangle and show they exist. 
Recall the support and cosupport conditions for a perverse sheaf described in Definition \ref{defnpervshf}.  
Consider the distinguished triangle (\ref{d1}) applied to $Y$ with values in the perverse sheaf $\uuline{\mathcal{S}_0}$.
Explicitly, this is $H^k(Y;i_*i^!\uuline{\mathcal{S}_0}) \to H^k(Y;\uuline{\mathcal{S}_0}) \to H^k(Y;j_*j^*\uuline{\mathcal{S}_0})$ 
(see eqn. (\ref{d1coh})).
Recall that, $H^k(Y;j_*j^*\uuline{\mathcal{S}_0}) \cong H^k(Y^o;\mathbb{Q})\simeq H^k(Y^o)$. 
Then applying the cosupport condition in Definition \ref{defnpervshf}, $H^k(Y;i_*i^!\uuline{\mathcal{S}_0})=0$ which implies that
\begin{equation} \label{HLowDeg1}
H^i(Y;\uuline{\mathcal{S}_0}) \simeq H^i(Y^o) \text{ } (0 < i < n-1).
\end{equation}
Consider once again the distinguished triangle (\ref{d1}) applied to $Y$ with values in the 
perverse sheaf $\uuline{\mathcal{S}_0}$ in the following degrees,
\begin{equation} \label{ExSeqDeg1}
\begin{CD}
H^{n-1}(Y;i_*i^!\uuline{\mathcal{S}_0}) @>>> H^{n-1}(Y;\uuline{\mathcal{S}_0}) @>>> H^{n-1}(Y;j_*j^*\uuline{\mathcal{S}_0}) @>>> H^n(Y;i_*i^!\uuline{\mathcal{S}_0})
\end{CD}
\end{equation}
By the cosupport condition in Definition \ref{defnpervshf}, $H^{n-1}(Y;i_*i^!\uuline{\mathcal{S}_0})=0$.  So (\ref{ExSeqDeg1}) becomes,
\begin{equation} \label{ExSeqDeg2}
\begin{CD}
0 @>>> H^{n-1}(Y;\uuline{\mathcal{S}_0}) @>>> H^{n-1}(Y;j_*j^*\uuline{\mathcal{S}_0}) @>>> H^n(Y;i_*i^!\uuline{\mathcal{S}_0})
\end{CD}
\end{equation}
Identifying the maps in (\ref{ExSeqDeg2}) we have,
\begin{equation} \label{ExSeqDeg3}
\begin{CD}
0 @>>> H^{n-1}(Y;\uuline{\mathcal{S}_0}) @>\theta>> H^{n-1}(Y;j_*j^*\uuline{\mathcal{S}_0}) @>\phi>> H^n(Y;i_*i^!\uuline{\mathcal{S}_0})
\end{CD}
\end{equation}
By inspection of (\ref{ExSeqDeg3}), $\theta$ is an injection.  Recall that $H^n(Y;i_*i^!\uuline{\mathcal{S}_0})=H^n_c(c^o L;\uuline{\mathcal{S}_0})=K_0$.
So (\ref{ExSeqDeg3}) can be rewritten as 
\begin{equation} \label{ExSeqDeg4}
\begin{CD}
0 @>>> H^{n-1}(Y;\uuline{\mathcal{S}_0}) @>\theta>> H^{n-1}(Y;j_*j^*\uuline{\mathcal{S}_0}) @>\phi>> K_0
\end{CD}
\end{equation}
Recall Proposition \ref{defnKC}, where we defined $K_0=Im(\beta)\subset H^n(\mathbb{Q})$. This implies that
$\phi=\alpha \circ \beta = 0$ since the horizontal part of (\ref{d3}) is part 
of a long exact sequence.  Hence $\phi$ is the
zero map which means that $\theta$ is an isomorphism. The result is that
$H^{n-1}(Y;\uuline{\mathcal{S}_0}) \simeq H^{n-1}(Y;j_*j^*\uuline{\mathcal{S}_0})$.
This means that (\ref{HLowDeg1}) can be rewritten to include
degree $n-1$ as,
\begin{equation} \label{HLowDeg2}
H^i(Y;\mathcal{S}_0) \simeq H^i(Y^o) \text{ } (0 < i \leq n-1).
\end{equation}
\\
Consider the distinguished triangle (\ref{d2}) applied to $Y$ with values in the perverse sheave $\uuline{\mathcal{S}_0}$.
Then for $i>n+1$, by the support condition in Definition \ref{defnpervshf}, $H^k(Y;i_*i^*\uuline{\mathcal{S}_0})=0$ which implies that
$H^k(Y;\uuline{\mathcal{S}_0}) \simeq H^k(Y;j_!j^*\uuline{\mathcal{S}_0}) \simeq H^k_c(Y^o) \simeq H^k(Y,c^o L;\uuline{\mathcal{S}_0}) \simeq H^k(Y)$.
It follows that,
\begin{equation}\label{HHiDeg1}
H^i(Y;\uuline{\mathcal{S}_0}) \simeq H^i(Y) \text{ } (i > n+1)
\end{equation}
Consider once again the distinguished triangle (\ref{d2}) applied to $Y$ with values in the 
perverse sheaf $\uuline{\mathcal{S}_0}$ in the following degrees,
\begin{equation} \label{ExSeqDegs3}
\begin{CD}
H^{n}(Y;i_*i^*\uuline{\mathcal{S}_0}) @>>> H^{n+1}(Y;j_!j^*\uuline{\mathcal{S}_0}) @>>> H^{n+1}(Y;\uuline{\mathcal{S}_0}) @>>> H^{n+1}(Y;i_*i^*\uuline{\mathcal{S}_0})
\end{CD}
\end{equation}
By the support condition in Definition \ref{defnpervshf}, $H^{n+1}(Y;i_*i^*\uuline{\mathcal{S}_0})=0$.  So (\ref{ExSeqDegs3}) becomes,
\begin{equation} \label{ExSeqDegs4}
\begin{CD}
H^{n}(Y;i_*i^*\uuline{\mathcal{S}_0}) @>>> H^{n+1}(Y;j_!j^*\uuline{\mathcal{S}_0}) @>>> H^{n+1}(Y;\uuline{\mathcal{S}_0}) @>>> 0
\end{CD}
\end{equation}
Identifying the maps in (\ref{ExSeqDegs4}) we have,
\begin{equation} \label{ExSeqDeg5}
\begin{CD}
H^{n}(Y;i_*i^*\uuline{\mathcal{S}_0}) @>\omega>> H^{n+1}(Y;j_!j^*\uuline{\mathcal{S}_0}) @>\tau>> H^{n+1}(Y;\uuline{\mathcal{S}_0}) @>>>  0
\end{CD}
\end{equation}
By inspection of (\ref{ExSeqDeg5}) $\tau$ is a surjection.  Recall that $H^n(Y;i_*i^*\uuline{\mathcal{S}_0}) = H^n(c^o L;\uuline{\mathcal{S}_0}) = C_0$.
\begin{equation} \label{ExSeqDeg6}
\begin{CD}
C_0 @>\omega>> H^{n+1}(Y;j_!j^*\uuline{\mathcal{S}_0}) @>\tau>> H^{n+1}(Y;\uuline{\mathcal{S}_0}) @>>> 0
\end{CD}
\end{equation}
Recall Proposition \ref{defnKC}, where we defined $C_0=im(\delta)$.  Define $\mu:C_0 \to H^{n+1}(Y;\uuline{\mathcal{S}_0})$. By exactness it follows that $C_0=ker(\mu)=im(\delta)$ which is a subgroup of $H^{n+1}(i_*i^!\mathbb{Q})$.
Since every element in $C_0$ maps to 0 in $H^{n+1}(Y;j_!j^*\uuline{\mathcal{S}_0}) \cong H^{n+1}_c(Y^o) \cong H^{n+1}(Y;\uuline{\mathcal{S}_0})$ the map from $\mu$
is the $0$-map and thus $\tau$ is a bijection.  It follows that, we can 
rewrite (\ref{HHiDeg1}) as
\begin{equation}\label{HHiDeg2}
H^i(Y;\uuline{\mathcal{S}_0}) \simeq H^i(Y) \text{ } (i \geq  n+1)
\end{equation}
Identifying the maps in the diagram (\ref{d3}), we have
\begin{equation} \label{ShExSeq1}
\begin{CD}
0 @>>> K_0 @>{c \cdot b}>> H^n(\uuline{\mathcal{S}_0}) @>d>> H^n(j_*j^*\mathbb{Q}) @>>> & 0
\end{CD}
\end{equation}
We must show exactness at each term.  Since $c \cdot b$ is injective we have exactness at $K_0$.  Since
$d$ is surjective we have exactness at $H^n(j_*j^*\uuline{\mathbb{Q}})$.  All that remains is to show exactness at
$H^n(\uuline{\mathcal{S}_0})$.  Recall that $K_0=ker(\gamma)$.  Take $x \in K_0$, then 
$d((c \cdot b)(x))= \gamma(x) = 0$.  This shows that $im(c \cdot b) \subset ker(d)$.  It remains to show that
$ker(d) \subset im(c \cdot b)$.  Take $x \in ker(d)$.  We want to show $\exists$ $y \in H^n(\mathbb{Q})$ such that
$(c \cdot b)(y)=x$.  Let $y \in H^n(\mathbb{Q})$, $x \in ker(d)$, and take $\gamma(y)=d(x)$.  Recall that we required the map $d$ to be surjective, 
$(c \cdot b)(y)=x$ which means that $ker(d) \subset im(c \cdot b)$ hence, we have exactness at $H^n(\mathcal{S}_0)$.

Identifying the maps in the diagram (\ref{d3}), we have
\begin{equation} \label{ShExSeq2}
\begin{CD}
0 @>>> H^n(\mathbb{Q}) @>c>> H^n(\mathcal{S}_0) @>{e \cdot d}>> C_0 @>>> 0
\end{CD}
\end{equation}
We must show exactness at each term.  Since $c$ is an injection we have exactness at $H^n(\mathbb{Q})$ and
since $e \cdot d$ is surjective we have exactness at $C_0$.  All that remains is to show exactness at $H^n(\uuline{\mathcal{S}_0})$.
Recall that $C_0= im(\delta)=ker(\mu)$ which is a subgroup of $H^{n+1}(i_*i^!\mathbb{Q})$.  Take $x \in H^n(\mathbb{Q})$ then $(e \cdot d)(c(x))= \delta(\gamma(x)) =0$.  This shows that 
$im(c) \subset ker(e \cdot d)$.  It remains to show that $ker(e \cdot d)  \subset im(c)$.  Take $y \in ker(e \cdot d)$.
We want to show $\exists$ $z \in H^n(\mathbb{Q})$ such that $c(z)=y$.  Let $z \in H^n(\mathbb{Q})$, $y \in ker(e \cdot d)$, 
and take $\delta \cdot \gamma(z)=e \cdot d (y)$.  The map $e \cdot d$ is surjective since both e and d are surjective.  Then 
$y=c(z)$ which means that $ker(e \cdot d)  \subset im(c)$ hence we have exactness at $H^n(\uuline{\mathcal{S}_0})$.
This completes the proof of parts 1 and 2.
\end{proof}
In the next section we will present the proof of part 3 of Theorem \ref{mainthm}.
\section{Duality}
\subsection{Overview and Definitions}
The goal of this section is to prove part three of the main result.  
We will use part two of Theorem \ref{MV} (MacPherson-Vilonen) 
to prove that $\uuline{\mathcal{S}_0}$ is self-dual in $P_\mathbb{Q}(Y)$.  
The organization of this section will be around two main themes.
The first goal will be to construct a duality functor in $Z(Y,y)$ and then show that the object 
defined in Definition \ref{defnKC} is self-dual.  Then Theorem \ref{MV} will generate
the corresponding self-dual object in $\mathbb{P}(Y)$.  Recall that we will use definitions
and statements from section \ref{conv}.

\subsection{Duality in $Z(Y,y)$}
We seek a duality functor $\mathcal{D}_Z$ in $Z(Y,y)$, 
that is compatible with the Zig-Zag functor $\mu:\mathbb{P}(Y) \to Z(Y,y)$ stated in
Definition \ref{zigzagfunct}.  We will need the following lemma.
\begin{lem} \label{dualident}
Given $\Theta=(\mathcal{L},K,C,\alpha,\beta,\gamma) \in Obj(Z(Y,y))$.  Then the 
morphism $\hat{\phi}$ from Proposition \ref{indmap} gives rise to isomorphisms:
\begin{enumerate}
\item
$H^{n-1}(i^*j_*\mathcal{L}^*[-2n]) \cong (H^n(i^*j_*\mathcal{L}))^*$
\item
$H^n(i^*j_*\mathcal{L}^*[-2n]) \cong (H^{n-1}(i^*j_*\mathcal{L}))^*$
\end{enumerate}
where $\mathcal{D}_V(\mathcal{L}) = \mathcal{L}^*$ and $()^*$ is the vector space dual.
\end{lem}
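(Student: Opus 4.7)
The plan is to obtain $\hat\phi$ by applying Proposition \ref{indmap} to an appropriate Verdier dual pair on $Y$ whose global hypercohomology recovers exactly the stalk cohomology groups at $y$ appearing in the statement. The two isomorphisms will then fall out by specializing the degree $i$ in the resulting non-degenerate pairing.

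Concretely, I would set $\uuline{\mathcal{S}}^\cdot := i_*i^*j_*\mathcal{L}$ and $\uuline{\mathcal{T}}^\cdot := i_*i^!j_!\mathcal{L}^*[-2n]$ as complexes of sheaves on $Y$. The standard Verdier duality exchanges, all consequences of Definition \ref{verdduualfunc} --- namely $\mathcal{D}_V i_* \cong i_*\mathcal{D}_V$ (since $i$ is closed), $\mathcal{D}_V i^* \cong i^!\mathcal{D}_V$, and $\mathcal{D}_V j_* \cong j_!\mathcal{D}_V$ (since $j$ is open) --- yield $\mathcal{D}_V(\uuline{\mathcal{S}}^\cdot) \cong i_*i^!j_!\mathcal{L}^* = \uuline{\mathcal{T}}^\cdot[2n]$. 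By the involutivity of $\mathcal{D}_V$ this is equivalent to the existence of a Verdier dual pairing $\uuline{\mathcal{S}}^\cdot \otimes \uuline{\mathcal{T}}^\cdot \to \mathbb{D}_Y[-2n]$ in the sense of Definition \ref{vdp}, so Proposition \ref{indmap} supplies a non-degenerate cohomological pairing
\[
\hat\phi \colon H^i(Y;\uuline{\mathcal{S}}^\cdot) \otimes H^{2n-i}(Y;\uuline{\mathcal{T}}^\cdot) \to \mathbb{Q}.
\]

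Since $i_*$ is pushforward from a point, the first factor equals $H^i(i^*j_*\mathcal{L})$, while the second reduces to $H^{-i}(i^!j_!\mathcal{L}^*)$ after absorbing the $[-2n]$ shift. Lemma \ref{jdualident} then converts this into $H^{-i-1}(i^*j_*\mathcal{L}^*)$, and a final cohomological shift rewrites the result as $H^{2n-i-1}(i^*j_*\mathcal{L}^*[-2n])$. The pairing therefore becomes $H^i(i^*j_*\mathcal{L}) \otimes H^{2n-i-1}(i^*j_*\mathcal{L}^*[-2n]) \to \mathbb{Q}$, nondegenerate for every $i$; specializing to $i=n$ and $i=n-1$ delivers parts (1) and (2) respectively. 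The main obstacle is strictly bookkeeping --- tracking the $[-2n]$ built into Definition \ref{vdp}, the $[1]$ in Lemma \ref{jdualident}, and the intrinsic $[2n]$ in $\mathcal{L}^* = \mathcal{D}_V(\mathcal{L})$. One should also note that, although Lemma \ref{jdualident} is only stated for $k>0$, its proof actually constructs a quasi-isomorphism of complexes $i^*j_*\mathcal{P}^\cdot \cong i^!j_!\mathcal{P}^\cdot[1]$ (the middle vertex of the triangle obtained by applying $i^*$ vanishes because $i^*j_! = 0$), so the induced cohomology isomorphism is valid in every degree, including the non-positive values $k=-i-1$ that appear here.
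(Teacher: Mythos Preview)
Your argument is correct and uses the same essential ingredients as the paper --- the Verdier duality exchanges $\mathcal{D}_V j_* \cong j_!\mathcal{D}_V$, $\mathcal{D}_V i^* \cong i^!\mathcal{D}_V$, together with Lemma~\ref{jdualident} --- but the packaging differs. The paper works entirely at the stalk level: it writes a short chain of isomorphisms for $H^{n-1}(i^*j_*\mathcal{L}^*[-2n])$ directly, turning $i^*j_*\mathcal{D}_V(\mathcal{L})$ into $\mathcal{D}_V(i^!j_!\mathcal{L})$ and then using that on the point $\{y\}$ Verdier duality is ordinary vector-space duality, so $H^k(\mathcal{D}_V(A)) \cong (H^{-k}(A))^*$; Lemma~\ref{jdualident} is invoked in the very last step to pass from $i^!j_!$ back to $i^*j_*$. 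Your route instead pushes everything forward to $Y$ via $i_*$, constructs an honest Verdier dual pair $(\uuline{\mathcal{S}}^\cdot,\uuline{\mathcal{T}}^\cdot)$ on $Y$, and invokes Proposition~\ref{indmap} as a black box. This is slightly more roundabout but has the virtue of making the role of $\hat\phi$ from Proposition~\ref{indmap} explicit, which the lemma statement actually asks for; the paper's stalk-wise computation leaves that connection implicit. Your remark that the proof of Lemma~\ref{jdualident} in fact yields a quasi-isomorphism valid in all degrees (not just $k>0$) is a necessary and correct observation that the paper glosses over.
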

\noindent
Remark:  The following proof utilizes calculations from p. 185-186, Lemma 2.20 in sect. 20 from 
Ref.~\cite{gormac:wghtdcoh}.
\begin{proof}
The morphism $\hat{\phi}$ of Proposition \ref{indmap} gives rise to the following 
\begin{align}
H^{n-1}(i^*j_*\mathcal{L}^*[-2n])   &\cong H^{n-1}(i^*\mathcal{D}_V(j_!\mathcal{L}))[-2n]\notag\\
									&\cong H^{n-1}(\mathcal{D}_V(i^!j_!\mathcal{L}))[-2n]\notag\\
									&\cong H^{n+1}(\mathcal{D}_V(i^!j_!\mathcal{L}))\notag\\
									&\cong (H^n(i^*j_*\mathcal{L}))^*  
\end{align}
where $H^{n+1}(\mathcal{D}_V(i^!j_!\mathcal{L})) \cong (H^n(i^*j_*\mathcal{L}))^*$ using Lemma \ref{jdualident}.
\begin{align}
H^n(i^*j_*\mathcal{L}^*[-2n])       &\cong H^n(i^*\mathcal{D}_V(j_!\mathcal{L})[-2n])\notag\\
									&\cong H^n(\mathcal{D}_V(i^!j_!\mathcal{L}))[-2n]\notag\\
									&\cong H^{n}(\mathcal{D}_V(i^!j_!\mathcal{L}))\notag\\
									&\cong (H^{n-1}(i^*j_*\mathcal{L}))^*  
\end{align}
where $H^{n}(\mathcal{D}_V(i^!j_!\mathcal{L})) \cong (H^{n-1}(i^*j_*\mathcal{L}))^*$ using Lemma \ref{jdualident}.
\end{proof}

\noindent
We now define the dual of an object in $Z(Y,y)$.
\begin{defn} \label{dualzzobj}
Given $\Theta=(\mathcal{L},K,C,\alpha,\beta,\gamma) \in Obj(Z(Y,y))$.  
Define $\mathcal{D}_Z(\Theta)=(\mathcal{L}^*,C^*,K^*,\gamma^*,\beta^*,\alpha^*)$
where $\gamma^*:H^{n-1}(i^*j_* \mathcal{L}^*) \to C^*$ is the map dual to $\alpha$, $\beta^*:C^* \to K^*$
is the map dual to $\beta$, and $\alpha^*:K^* \to H^n(i^*j_* \mathcal{L}^*)$ is the map dual to $\gamma$.
\end{defn}
\noindent
Consider the object $\Theta=(\mathcal{L},K,C,\alpha,\beta,\gamma)$ with its exact sequence and maps identified as follows.
\begin{equation} \label{zzobjwmaps}
\begin{CD}
H^{n-1}(i^*j_* \mathcal{L}) @>{\alpha}>> K @>{\beta}>> C @>{\gamma}>> H^n(i^*j_* \mathcal{L})
\end{CD}
\end{equation}
  
\begin{lem} \label{dualobjzzcat}
Let $\Theta=(\mathcal{L},K,C,\alpha,\beta,\gamma) \in Obj(Z(Y,y))$ and 
$\mathcal{D}_Z(\Theta)=(\mathcal{L}^*,C^*,K^*,$\\$\gamma^*,\beta^*,\alpha^*)$ 
with the following maps,
\begin{equation} \label{zzobjstar}
\begin{CD}
H^{n-1}(i^*j_* \mathcal{L}^*) @>{\gamma^*}>> C^* @>{\beta^*}>> K^* @>{\alpha^*}>> H^n(i^*j_* \mathcal{L}^*)
\end{CD}
\end{equation}
where $\alpha^*$, $\beta^*$, and $\gamma^*$ are the dual maps defined in Definition \ref{dualzzobj}.  
Then the sequence (\ref{zzobjstar}) is exact and it follows that $\mathcal{D}_Z(\Theta)$ is an object of $Z(Y,y)$.
\end{lem}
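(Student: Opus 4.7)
The plan is to derive the exactness of (\ref{zzobjstar}) by recognizing it as, up to the identifications supplied by Lemma \ref{dualident}, the vector-space dual of the original exact sequence (\ref{zzobjwmaps}). The rough flow has three stages: apply the dualization functor $(-)^*$ to the sequence attached to $\Theta$; use Lemma \ref{dualident} to rewrite the endpoints so that the dual sequence lands in the cohomology groups of $\mathcal{L}^*$; finally, verify that the rewritten maps coincide (up to these isomorphisms) with the maps $\gamma^*, \beta^*, \alpha^*$ defined in Definition \ref{dualzzobj}.

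First I would start from the exact four-term sequence
\begin{equation*}
H^{n-1}(i^*j_* \mathcal{L}) \xrightarrow{\alpha} K \xrightarrow{\beta} C \xrightarrow{\gamma} H^n(i^*j_* \mathcal{L}),
\end{equation*}
and observe that since we are working over $\mathbb{Q}$, the functor $(-)^* = \mathrm{Hom}_{\mathbb{Q}}(-,\mathbb{Q})$ is exact on the category of $\mathbb{Q}$-vector spaces. Applying it yields the exact sequence
\begin{equation*}
(H^n(i^*j_* \mathcal{L}))^* \xrightarrow{\gamma^{\vee}} C^* \xrightarrow{\beta^{\vee}} K^* \xrightarrow{\alpha^{\vee}} (H^{n-1}(i^*j_* \mathcal{L}))^*,
\end{equation*}
where $\gamma^{\vee}, \beta^{\vee}, \alpha^{\vee}$ denote the ordinary vector-space duals of the original arrows.

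Next I would invoke Lemma \ref{dualident} to produce canonical isomorphisms $(H^n(i^*j_* \mathcal{L}))^* \cong H^{n-1}(i^*j_* \mathcal{L}^*)$ and $(H^{n-1}(i^*j_* \mathcal{L}))^* \cong H^n(i^*j_* \mathcal{L}^*)$. Composing the four-term dual sequence on the left with the first isomorphism and on the right with the second, one obtains precisely the sequence
\begin{equation*}
H^{n-1}(i^*j_* \mathcal{L}^*) \xrightarrow{\gamma^*} C^* \xrightarrow{\beta^*} K^* \xrightarrow{\alpha^*} H^n(i^*j_* \mathcal{L}^*),
\end{equation*}
with the outer maps coinciding with those prescribed in Definition \ref{dualzzobj} (since those maps were defined via exactly these duality identifications, with $\beta^* = \beta^{\vee}$ in the middle). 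Exactness is preserved by isomorphisms, so this sequence is exact, and hence $\mathcal{D}_Z(\Theta)$ meets the defining condition of $Z(Y,y)$.

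The substantive content sits entirely in Lemma \ref{dualident}, which has already been established; the remaining obstacle is purely bookkeeping, namely checking that the naming convention in Definition \ref{dualzzobj} actually produces the same arrows as the naive dualization followed by the identifications. I would dispatch this by tracking each map through Verdier duality (using $\mathcal{D}_V(j_!\mathcal{L}) \cong j_* \mathcal{L}^*[-2n]$ and commutation with $i^!$) and noting that both constructions arise from applying $\mathrm{Hom}(-, \mathbb{Q})$ to the same underlying morphism, so no further compatibility needs to be verified beyond naturality of $\hat{\phi}$ from Proposition \ref{indmap}.
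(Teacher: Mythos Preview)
Your proposal is correct and follows essentially the same route as the paper: dualize the original exact sequence over $\mathbb{Q}$ to obtain an exact sequence with endpoints $(H^n(i^*j_*\mathcal{L}))^*$ and $(H^{n-1}(i^*j_*\mathcal{L}))^*$, then invoke Lemma~\ref{dualident} to rewrite those endpoints as $H^{n-1}(i^*j_*\mathcal{L}^*)$ and $H^n(i^*j_*\mathcal{L}^*)$, yielding~(\ref{zzobjstar}). Your extra remarks on why $(-)^*$ is exact and on the bookkeeping for the map identifications are fine elaborations but not departures from the paper's argument.
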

\begin{proof}
The dual of the exact sequence in (\ref{zzobjwmaps}) is given by the exact sequence,
\begin{equation} \label{zzobjstar2}
\begin{CD}
(H^n(i^*j_* \mathcal{L}))^* @>{\gamma^*}>> C^* @>{\beta^*}>> K^* @>{\alpha^*}>> (H^{n-1}(i^*j_* \mathcal{L}))^*
\end{CD}
\end{equation}
By Lemma \ref{dualident}, since $(H^n(i^*j_* \mathcal{L}))^* \cong H^{n-1}(i^*j_* \mathcal{L}^*)$ and
$(H^{n-1}(i^*j_* \mathcal{L}))^* \cong H^n(i^*j_* \mathcal{L}^*)$, we can substitute these into (\ref{zzobjstar2})
which becomes exactly (\ref{zzobjstar}).  Since the sequence is exact it follows that $\mathcal{D}_Z(\Theta) \in Obj(Z(Y,y))$.
\end{proof}
\noindent
The duality functor $\mathcal{D}_Z$ in $Z(Y,y)$ needs to be compatible the Verdier dual in  $\mathbb{P}(Y)$.  The
following proposition demonstrates this.
\begin{prop} \label{diagPZ}
Let $\mu$ be the map as defined in Theorem \ref{MV}.  Then the following diagram commutes up to canonical
isomorphism:
\begin{equation} \label{commdiag}
\begin{CD}
\mathbb{P}(Y)			@>\mu>>	Z(Y,y)\\ 
@V{\mathcal{D}_V}VV		@V{\mathcal{D}_Z}VV\\
\mathbb{P}(Y)			@>\mu>>	Z(Y,y) 
\end{CD}
\end{equation}
\end{prop}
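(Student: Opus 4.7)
The plan is to fix an arbitrary $\uuline{Q}^\cdot \in \mathbb{P}(Y)$ and exhibit a canonical isomorphism of sextuples $\mu(\mathcal{D}_V(\uuline{Q}^\cdot)) \cong \mathcal{D}_Z(\mu(\uuline{Q}^\cdot))$ in $Z(Y,y)$. Unpacking the two definitions, this amounts to producing natural isomorphisms on each of the three entries of the sextuple, together with a compatibility of the three maps $\alpha,\beta,\gamma$ of the exact sequence (\ref{zzobj}) with their dualized counterparts $\gamma^*,\beta^*,\alpha^*$ of Definition \ref{dualzzobj}.

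For the entry-by-entry identification, I will invoke the standard canonical isomorphisms
\begin{equation*}
j^*\mathcal{D}_V \cong \mathcal{D}_V j^*, \qquad i^*\mathcal{D}_V \cong \mathcal{D}_V i^!, \qquad i^!\mathcal{D}_V \cong \mathcal{D}_V i^*,
\end{equation*}
the first because $j$ is an open embedding (so $j^!=j^*$) and the remaining two by Verdier duality (\ref{verddualthm}) applied to the closed embedding $i$. Taking cohomology and using $H^n(\mathcal{D}_V(\uuline{A}^\cdot)) \cong (H^{-n}(\uuline{A}^\cdot))^*$ (with the shift by $2n$ that comes from $Y^o$ being a $2n$-dimensional manifold), one obtains $j^*\mathcal{D}_V(\uuline{Q}^\cdot) \cong (j^*\uuline{Q}^\cdot)^*$, $H^n(i^!\mathcal{D}_V(\uuline{Q}^\cdot)) \cong (H^n(i^*\uuline{Q}^\cdot))^*$, and $H^n(i^*\mathcal{D}_V(\uuline{Q}^\cdot)) \cong (H^n(i^!\uuline{Q}^\cdot))^*$, matching the three entries of $\mathcal{D}_Z(\mu(\uuline{Q}^\cdot))$ exactly. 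The two flanking terms $H^{n-1}(i^*j_* \mathcal{L}^*)$ and $H^n(i^*j_* \mathcal{L}^*)$ in the exact sequence for $\mu(\mathcal{D}_V(\uuline{Q}^\cdot))$ are then matched with $(H^n(i^*j_* \mathcal{L}))^*$ and $(H^{n-1}(i^*j_* \mathcal{L}))^*$ respectively by direct application of Lemma \ref{dualident}.

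It remains to check that, under these entry-wise isomorphisms, the three arrows in the exact sequence for $\mu(\mathcal{D}_V(\uuline{Q}^\cdot))$ coincide with the vector-space duals $\gamma^*,\beta^*,\alpha^*$ produced by $\mathcal{D}_Z$. This will be carried out by applying $\mathcal{D}_V$ to the defining distinguished triangle (\ref{d1}) for $\uuline{Q}^\cdot$, recognizing the resulting triangle as (\ref{d2}) for $\mathcal{D}_V(\uuline{Q}^\cdot)$ via the intertwining identities above, and then using Lemma \ref{dualident} to reindex the resulting long exact sequence so that it sits in the format of (\ref{zzobj}). The main technical obstacle will be verifying that the connecting homomorphisms induced by the dualized triangle coincide, on the nose and not merely up to a sign or an automorphism, with the duals of $\alpha,\beta,\gamma$. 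I expect to handle this by appealing to naturality of the Verdier duality isomorphism (\ref{verddualthm}) with respect to morphisms of distinguished triangles, together with the fact that $\mathcal{D}_V$ is a contravariant triangulated equivalence; the bookkeeping of the shifts introduced by Lemma \ref{dualident} and by the dimension of $Y^o$ is where care will be needed.
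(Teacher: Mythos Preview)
Your proposal is correct and follows essentially the same route as the paper: unpack $\mu(\mathcal{D}_V(\uuline{Q}^\cdot))$ and $\mathcal{D}_Z(\mu(\uuline{Q}^\cdot))$, match the three entries of the sextuple via the standard Verdier intertwining identities $i^*\mathcal{D}_V\cong\mathcal{D}_V i^!$, $i^!\mathcal{D}_V\cong\mathcal{D}_V i^*$, $j^*\mathcal{D}_V\cong\mathcal{D}_V j^*$, and invoke Lemma~\ref{dualident} for the flanking terms. The paper's argument is in fact briefer than yours on the compatibility of the maps $\alpha,\beta,\gamma$ with their duals---it simply asserts the simplified exact sequence coincides with the dualized one---so your explicit plan to dualize the triangle~(\ref{d1}) and track the connecting maps through Lemma~\ref{dualident} is, if anything, more careful than what the paper records.
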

\begin{proof}
Let $\uuline{\mathcal{S}^\cdot} \in Obj(\mathbb{P}(Y))$.  We want to show that
$\mu(\mathcal{D}_V(\uuline{\mathcal{S}^\cdot})) \cong \mathcal{D}_Z(\mu(\uuline{\mathcal{S}^\cdot}))$.
Applying $\mu$ to 
$\uuline{\mathcal{S}^\cdot}$ gives an object in $Z(Y,y)$ which by Definition \ref{zzfunc} is a triple,
($j^* \uuline{\mathcal{S}^\cdot}$, $H^n(i^!\uuline{\mathcal{S}^\cdot})$, $H^n(i^*\uuline{\mathcal{S}^\cdot})$).
This triple has an associated exact sequence obtained by applying $i^*$ to the distinguished triangle (\ref{d1}),
\begin{equation} \label{zzfunc1}
\begin{CD}
H^{n-1}(i^*j_* j^*\uuline{\mathcal{S}^\cdot}) @>>> H^n(i^!\uuline{\mathcal{S}^\cdot}) @>>> H^n(i^*\uuline{\mathcal{S}^\cdot}) @>>> H^n(i^*j_* j^*\uuline{\mathcal{S}^\cdot}) 
\end{CD}
\end{equation}
Applying $\mathcal{D}_Z$ to the Zig-zag object $\mu(\uuline{\mathcal{S}^\cdot})$ makes the maps in eqn. (\ref{zzfunc1}) 
reverse direction with each term dualized.  The resulting Zig-zag object, $\mathcal{D}_Z(\mu(\uuline{\mathcal{S}^\cdot}))$, can be expressed as 
($(j^* \uuline{\mathcal{S}^\cdot})^*$, $(H^n(i^*\uuline{\mathcal{S}^\cdot}))^*$,$(H^n(i^!\uuline{\mathcal{S}^\cdot}))^*$) which has the following exact sequence,
\begin{equation} \label{zzfunc2}
\begin{CD}
(H^n(i^*j_* j^*\uuline{\mathcal{S}^\cdot}))^* @>>> (H^n(i^*\uuline{\mathcal{S}^\cdot}))^* @>>> (H^n(i^!\uuline{\mathcal{S}^\cdot}))^* @>>> (H^{n-1}(i^*j_* j^*\uuline{\mathcal{S}^\cdot}))^*
\end{CD}
\end{equation}
Now consider the Zig-zag object $\mu(\mathcal{D}_V(\uuline{\mathcal{S}^\cdot}))$.  This is a 
triple ($j^* \mathcal{D}_V(\uuline{\mathcal{S}^\cdot})$, $H^n(i^!\mathcal{D}_V(\uuline{\mathcal{S}^\cdot})$, $H^n(i^*\mathcal{D}_V(\uuline{\mathcal{S}^\cdot})$) with exact sequence
\begin{equation} \label{zzfunc3}
\begin{CD}
H^{n-1}(i^*j_* j^*\mathcal{D}_V(\uuline{\mathcal{S}^\cdot})) @>>> H^n(i^!\mathcal{D}_V(\uuline{\mathcal{S}^\cdot})) @>>> H^n(i^*\mathcal{D}_V(\uuline{\mathcal{S}^\cdot})) @>>> H^n(i^*j_* j^*\mathcal{D}_V(\uuline{\mathcal{S}^\cdot}))
\end{CD}
\end{equation}
Simplifying the object and the exact sequence in eqn. (\ref{zzfunc3}) using Lemma \ref{dualident} we have the triple
($(j^* \uuline{\mathcal{S}^\cdot})^*$, $(H^n(i^*\uuline{\mathcal{S}^\cdot}))^*$,$(H^n(i^!\uuline{\mathcal{S}^\cdot}))^*$) with exact sequence,
\begin{equation} \label{zzfunc4}
\begin{CD}
(H^n(i^*j_* j^*\uuline{\mathcal{S}^\cdot}))^* @>>> (H^n(i^*\uuline{\mathcal{S}^\cdot}))^* @>>> (H^n(i^!\uuline{\mathcal{S}^\cdot}))^* @>>> (H^{n-1}(i^*j_* j^*\uuline{\mathcal{S}^\cdot}))^*
\end{CD}
\end{equation}
which is exactly the same as the Zig-zag object $\mathcal{D}_Z(\mu(\uuline{\mathcal{S}^\cdot}))$.  Showing the diagram commutes on morphisms is a similar argument and will be left to the reader.
\end{proof}
\noindent
We will need the following lemma from linear algebra.
\begin{lem}\label{dualimg}{(\textbf{Duals of Images})}
Given A, B, C, and D are vector spaces with maps $f:A \to B$ and $g:C \to D$.  Let $<,>_1:A \times D \to \mathbb{Q}$ 
and $<,>_2:B \times C \to \mathbb{Q}$ be non-degenerate pairings such that 
$<a,g(c)>_1=<f(a),c>_2$ $\forall a \in A \text{ and } \forall c \in C$.  Then $<,>_1$ and $<,>_2$ induce a non-degenerate pairing $<,>_3:Im(f) \times Im(g) \to \mathbb{Q}$.
\end{lem}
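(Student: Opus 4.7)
The plan is to define the candidate pairing on $\text{Im}(f)\times\text{Im}(g)$ by the only formula consistent with the hypothesis, verify it is well-defined (using the compatibility identity so that the two natural definitions agree), and then read off non-degeneracy from the non-degeneracy of $\langle,\rangle_1$ and $\langle,\rangle_2$.

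Concretely, I would first define $\langle\,,\,\rangle_3\colon \text{Im}(f)\times\text{Im}(g)\to\mathbb{Q}$ by
\begin{equation}
\langle f(a),g(c)\rangle_3 \;:=\; \langle f(a),c\rangle_2 \;=\; \langle a,g(c)\rangle_1,
\end{equation}
the equality being exactly the compatibility hypothesis. To check well-definedness, suppose $f(a)=f(a')$ and $g(c)=g(c')$. Using the first expression, $\langle f(a),c\rangle_2=\langle f(a'),c\rangle_2$ depends only on $f(a)$, so the value is independent of the lift $a\mapsto f(a)$. Using the second expression symmetrically shows the value is independent of the lift $c\mapsto g(c)$. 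The compatibility identity guarantees the two formulas give the same number, so $\langle\,,\,\rangle_3$ is unambiguous and bilinear.

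For non-degeneracy, suppose $b=f(a)\in\text{Im}(f)$ satisfies $\langle b,g(c)\rangle_3=0$ for every $g(c)\in\text{Im}(g)$. Then by definition $\langle f(a),c\rangle_2=0$ for all $c\in C$ (note $c$ ranges over all of $C$, not a subset, because $g(c)$ ranges over all of $\text{Im}(g)$ as $c$ ranges over $C$). Non-degeneracy of $\langle\,,\,\rangle_2$ on $B\times C$ then forces $b=f(a)=0$. The symmetric argument using the identity $\langle a,g(c)\rangle_1=\langle f(a),c\rangle_2$ and non-degeneracy of $\langle\,,\,\rangle_1$ on $A\times D$ shows that if $d=g(c)\in\text{Im}(g)$ pairs trivially with every element of $\text{Im}(f)$, then $d=0$.

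I do not expect any step to be a serious obstacle: the statement is essentially linear-algebraic bookkeeping. The only conceptual point worth being careful about is that non-degeneracy of $\langle\,,\,\rangle_3$ on the images really does follow from non-degeneracy of $\langle\,,\,\rangle_1$ and $\langle\,,\,\rangle_2$ on the full spaces, which works here because the ``testing set'' for a given $f(a)$ is all of $C$ (not just a subspace), and symmetrically for $g(c)$.
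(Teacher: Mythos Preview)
Your proof is correct. The paper states this lemma without proof (it is presented as a standard linear-algebra fact and immediately followed by the next theorem), so there is nothing to compare against; your argument supplies exactly the elementary verification one would expect, with the well-definedness and non-degeneracy checks carried out cleanly.
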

\begin{thm} \label{mainresult}
The object $\Theta_0$ as defined in Proposition \ref{defnKC} is self-dual in $Z_\mathbb{Q}(Y,y)$.
\end{thm}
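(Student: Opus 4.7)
The plan is to invoke Theorem \ref{MV}(1), which makes $\mu$ a bijection on isomorphism classes, together with Proposition \ref{diagPZ}, which intertwines the two duality functors; self-duality of $\uuline{\mathcal{S}_0}$ then reduces to producing an explicit isomorphism $\Theta_0 \cong \mathcal{D}_Z(\Theta_0)$ in $Z_\mathbb{Q}(Y,y)$. By Definition \ref{dualzzobj}, the candidate dual is $\mathcal{D}_Z(\Theta_0) = (\uuline{\mathbb{Q}}^*, C_0^*, K_0^*, \gamma_0^*, \beta_0^*, \alpha_0^*)$. Because $Y^o$ is a smooth oriented $2n$-manifold, $\mathbb{D}_{Y^o} \cong \uuline{\mathbb{Q}}[2n]$, so with the $[-2n]$ shift of Definition \ref{vdp} the local-system component $\uuline{\mathbb{Q}}^*$ is canonically $\uuline{\mathbb{Q}}$. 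The content of the theorem is therefore concentrated in the construction of vector-space isomorphisms $K_0 \cong C_0^*$ and $C_0 \cong K_0^*$ compatible with $\alpha_0$ and $\gamma_0$.

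To build these, I would apply Proposition \ref{indmap} twice to obtain Poincare-Lefschetz pairings
\begin{equation*}
\langle\,,\,\rangle_{Y^o}\colon H^n_c(Y^o)\otimes H^n(Y^o)\to \mathbb{Q}\quad\text{and}\quad \langle\,,\,\rangle_{L}\colon H^n_c(c^oL)\otimes H^{n+1}_c(c^oL)\to \mathbb{Q};
\end{equation*}
the first is classical Poincare duality on the open manifold $Y^o$, and the second arises from contractibility of the cone (so $H^n_c(c^oL)\cong \tilde H^{n-1}(L)$ and $H^{n+1}_c(c^oL)\cong \tilde H^n(L)$) combined with Poincare duality on the closed oriented $(2n-1)$-manifold $L$. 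Writing $f\colon H^n_c(c^oL)\to H^n_c(Y^o)$ and $g\colon H^n(Y^o)\to H^{n+1}_c(c^oL)$ for the two maps of the long exact sequence used to define $K_0=\mathrm{Im}(f)$ and $C_0=\mathrm{Im}(g)$ in Proposition \ref{defnKC}, Lemma \ref{dualimg} will deliver the non-degenerate pairing $K_0\otimes C_0\to \mathbb{Q}$, provided one first establishes the crossing identity $\langle f(a),c\rangle_{Y^o}=\langle a,g(c)\rangle_{L}$ for all $a\in H^n_c(c^oL)$, $c\in H^n(Y^o)$; this is the self-duality of the long exact sequence coming from \eqref{d1} under Verdier duality on $Y$. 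Granting it, one obtains $K_0\cong C_0^*$ and $C_0\cong K_0^*$.

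It then remains to check that these isomorphisms intertwine the structure maps. The triviality $\beta_0=\beta_0^*=0$ handles the middle arrow. The canonical surjection $\alpha_0\colon H^n_c(c^oL)\twoheadrightarrow K_0$ is matched, under the Poincare identifications $(H^{n+1}_c(c^oL))^*\cong H^n_c(c^oL)$ and $C_0^*\cong K_0$, with $\gamma_0^*\colon (H^{n+1}_c(c^oL))^*\twoheadrightarrow C_0^*$ by the very construction of the pairing in Lemma \ref{dualimg}; the pairing $\gamma_0\leftrightarrow\alpha_0^*$ is dual. The main obstacle is the crossing identity itself: it is a naturality statement for Verdier duality applied to the distinguished triangle \eqref{d1}, but the bookkeeping that matches the $[-2n]$ shift of Definition \ref{vdp} against the one-step cohomological degree shift between $H^n_c(c^oL)$ and $H^{n+1}_c(c^oL)$ at the two ends of the sequence is the delicate part; once it is in place, the rest of the argument is essentially formal.
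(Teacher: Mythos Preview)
Your proposal is correct and follows essentially the same route as the paper: identify $\uuline{\mathbb{Q}}^*\cong\uuline{\mathbb{Q}}$, use the Poincar\'e--Lefschetz pairings on $Y^o$ and on $L$ (via Proposition~\ref{indmap}) together with Lemma~\ref{dualimg} to produce the non-degenerate pairing $K_0\times C_0\to\mathbb{Q}$, and then check compatibility with the structure maps. Your explicit isolation of the crossing identity $\langle f(a),c\rangle_{Y^o}=\langle a,g(c)\rangle_{L}$ as the hypothesis needed for Lemma~\ref{dualimg} is in fact more careful than the paper, which invokes that lemma without spelling this out; the only superfluous step is your opening appeal to Theorem~\ref{MV} and Proposition~\ref{diagPZ}, which belong to the subsequent Corollary rather than to this theorem.
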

\begin{proof}
We need to construct an isomorphism between $\Theta_0$ and its dual $\mathcal{D}_Z(\Theta_0)$ in $Z_\mathbb{Q}(Y,y)$ 
where $\Theta_0=(\uuline{\mathbb{Q}},K_0,C_0,\alpha_0,\beta_0,\gamma_0)$
and $\mathcal{D}_Z(\Theta_0)=(\uuline{\mathbb{Q}}^*,C^*_0,K^*_0,\alpha_0^*,\beta_0^*,\gamma_0^*)$.  In order to construct an isomorphism
in $Z_\mathbb{Q}(Y,y)$, recall the definition of a morphism in $Z_\mathbb{Q}(Y,y)$ (eqn. (\ref{zzmor})) as a map from $\Theta_0 \to \mathcal{D}_Z(\Theta_0)$ where the exact sequence for $\Theta_0$ maps isomorphically to the exact
sequence for $\mathcal{D}_Z(\Theta_0)$ and $\uuline{\mathbb{Q}}$ maps isomorphically to $\uuline{\mathbb{Q}}^*$.  These exact sequence 
maps are labelled in the following diagram.  
\begin{equation} \label{sddiag}
\begin{CD}
H^{n-1}(i^*j_* \mathcal{L}_0) @>f>> K_0 @>g>> C_0 @>h>> H^n(i^*j_* \mathcal{L}_0)\\ 
	@V{\kappa}VV	@V{\lambda}VV	@V{\nu}VV	@V{\xi}VV\\
H^{n-1}(i^*j_* \mathcal{L}_0^*[-2n]) @>l>> C^*_0 @>m>> K^*_0 @>n>> H^n(i^*j_* \mathcal{L}_0^*[-2n])
\end{CD}
\end{equation}
where the maps $\kappa, \lambda, \nu, \text{and } \xi$ are non-degenerate pairings of isomorphism classes
 defined as follows: $\kappa:<,>_\kappa$, $\lambda:<,>_\lambda$, $\nu:<,>_\nu$, and $\xi:<,>_\xi$. 
Before we begin constructing the vertical isomorphisms defined in (\ref{sddiag}), 
we must show that there is a map from $\uuline{\mathbb{Q}} \to \uuline{\mathbb{Q}}^*$.  The map $\uuline{\mathbb{Q}} \otimes \uuline{\mathbb{Q}} \to
\uuline{\mathbb{Q}}$ can be described by multiplication which implies that $\uuline{\mathbb{Q}}$
is dual to itself.
The remainder of the proof will be concerned with showing that the vertical maps 
$\kappa, \lambda, \nu, \text{and } \xi$ between the 
exact sequences are isomorphisms and that the diagram (\ref{sddiag}) commutes.  
By Lemma \ref{dualident}, $\kappa$ and $\xi$ are isomorphisms.  
Next, we must show that $\lambda$ and $\nu$ are isomorphisms.  Recall that 
$K_0=Im(H^n_c(c^o L) \to H^n_c(Y^0))$ and $C_0=Im(H^n(Y^0) \to H^{n+1}_c(c^o L))$ are vector spaces.
Using Proposition \ref{indmap} we have duality isomorphisms $H^n_c(c^o L)$ to $H^{n+1}_c(c^o L)$ and $H^n(Y^0)$ to  
$H^n_c(Y^0)$.\footnote{By use of distinguished triangles, $H^n_c(c^o L) \simeq H^{n-1}(L)$ where $dim(L)=2n-1$. This implies that $H^{n-1}(L) \to H^n(L)$ are dual since $L$ is compact.  Thus,
applying Proposition \ref{indmap}, $H^n_c(c^o L) \to H^{n+1}_c(c^o L)$ is a duality isomorphism.  
In addition, $dim(Y^0)=2n$ so it follows in a straight forward manner 
that $H^n(Y^0)$ and $H^n_c(Y^0)$ are dual also.}  This implies there are two non-degenerate pairings 
$<,>_1:H^n_c(c^o L) \times H^{n+1}_c(c^o L) \to \mathbb{Q}$
and $<,>_2:H^n(Y^0) \times H^n_c(Y^0) \to \mathbb{Q}$.  By use of Lemma \ref{dualimg} on duals of 
images, $K_0 \times C_0 \to \mathbb{Q}$ is a non-degenerate pairing and $\lambda: K_0 \to C^*_0$ and
$\nu:C_0 \to K^*_0$ are isomorphisms.  Hence, $f$ is the dual map to $n$, $g$ is the dual map to $m$,
and $h$ is the dual map to $l$.
\\
\indent 
It remains to show that the diagram commutes.   In order to show the diagram commutes,
let $x \in H^{n-1}(i^*j_*\mathcal{L}_0)$ and $c \in C_0$.  We want to show that
$\lambda(f(x))(c) = l(\kappa(x))(c)$.  This is the same as showing 
$<f(x),c>_\lambda=<x,h(c)>_\kappa$ $\forall$ $x \in H^{n-1}(i^*j_*\mathcal{L}_0)$ and $c \in C_0$.  Since $\lambda$ is 
a map of isomorphism classes, $\forall$ $y \in C^*_0$ the lift of $<f(x),y>_\lambda$ is $f(x)$.  
Recall that $h=l^*$.  Consider $<x,h(c)>_\kappa \in C^*_0$.  Then since
$h:C_0 \to H^n(i^*j_*\mathcal{L}_0)$, for $a \in H^{n-1}(i^*j_*\mathcal{L}_0^*)$, $h(c)=a=l^*(a)$.  So
$l(<x,a>_\kappa)=<x,l^*(a)>_\kappa=<x,h(c)>_\kappa$ where $<x,a>_\kappa \in H^{n-1}(i^*j_*\mathcal{L}_0^*)$.
It follows that a lift of $<x,a>_\kappa$ under $\kappa$ is $x$.  This shows that $\lambda(f(x))(c) = l(\kappa(x))(c)$.

Next we must show that for $k',k \in K_0$, $m(\lambda(k))(k')=\nu(g(k))(k')$.  This is equivalent to
showing that $<k,g(k')>_\lambda=<g(k),k'>_\nu$.  The lift of 
$<g(k),k'>_\nu$ under $\nu$ is $g(k)$.  Under $g$, $k \mapsto g(k)$.  Recall that $g=m^*$. Let $c=\in C^*_0$.  Then since
$g:K_0 \to C_0$, $g(k)=c=m^*(c)$.  It follows that $m(<k,c>_\lambda)=<k,m^*(c)>_\lambda=<k,g(k)>_\lambda$ where
$<k,g(k)>_\lambda \in C^*_0$.  The lift of $<k,g(k)>_\lambda=k$.  This
shows that $m(\lambda(k))(k')=\nu(g(k))(k')$.

The final part of showing the diagram commutes is to verify that for $y \in H^n(i^*j_*\mathcal{L}_0^*)$
and $c \in C_0$, $<c,f(y)>_\nu=<h(c),y>_\xi$.  This is equivalent to showing that 
$n(\xi(c))(y)=\xi(h(c))(y)$.  Recall that $\xi$ is a map of isomorphism classes.
The lift of $<h(c),y>_\xi=h(c)$.  Under $h$,$c \mapsto h(c)$.  Recall that 
$n=f^*$.  Let $k \in K^*$, then 
$f(y)=k^*(k)$.  Consider $<c,k>_\nu \in K^*_0$.  It follows that 
$<c,f(y)>_\nu=<c,n^*(k)>_\nu(<c,k>_\nu)$.  Now a lift of $<c,k>_\nu =c$.
This shows that $<c,f(y)>_\nu=<h(c),y>_\xi$, and hence the diagram (\ref{sddiag})
commutes.
\end{proof}
\begin{cor} \label{Sselfdual}
The perverse sheaf $\uuline{\mathcal{S}_0}$ is self-dual in $\mathbb{P}_\mathbb{Q}(Y)$.
\end{cor}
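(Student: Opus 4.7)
The plan is to invoke the machinery assembled in the preceding sections and let the bijection between $\mathbb{P}_{\mathbb{Q}}(Y)$ and $Z_{\mathbb{Q}}(Y,y)$ do essentially all the work. Specifically, I would chain together three ingredients: Proposition \ref{defnKC} (which identifies $\mu(\uuline{\mathcal{S}_0})$ with the zig-zag object $\Theta_0$), Theorem \ref{mainresult} (which exhibits an isomorphism $\Theta_0 \cong \mathcal{D}_Z(\Theta_0)$ in $Z_{\mathbb{Q}}(Y,y)$), and Proposition \ref{diagPZ} (which asserts that $\mu \circ \mathcal{D}_V \cong \mathcal{D}_Z \circ \mu$ up to canonical isomorphism).

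Concretely, I would first rewrite $\mu(\mathcal{D}_V(\uuline{\mathcal{S}_0})) \cong \mathcal{D}_Z(\mu(\uuline{\mathcal{S}_0})) \cong \mathcal{D}_Z(\Theta_0)$ using Proposition \ref{diagPZ} followed by Proposition \ref{defnKC}. Next, applying Theorem \ref{mainresult}, I obtain $\mathcal{D}_Z(\Theta_0) \cong \Theta_0 \cong \mu(\uuline{\mathcal{S}_0})$. Composing these isomorphisms in $Z_{\mathbb{Q}}(Y,y)$ produces $\mu(\mathcal{D}_V(\uuline{\mathcal{S}_0})) \cong \mu(\uuline{\mathcal{S}_0})$.

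To conclude, I would invoke part (1) of Theorem \ref{MV}, which states that $\mu$ induces a bijection on isomorphism classes between $\mathbb{P}_{\mathbb{Q}}(Y)$ and $Z_{\mathbb{Q}}(Y,y)$. Injectivity of that bijection means that two perverse sheaves whose images under $\mu$ are isomorphic must themselves be isomorphic; hence $\mathcal{D}_V(\uuline{\mathcal{S}_0}) \cong \uuline{\mathcal{S}_0}$ in $\mathbb{P}_{\mathbb{Q}}(Y)$, which is precisely the self-duality claim.

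Since every nontrivial step has already been done — the commutativity of the square in Proposition \ref{diagPZ} and the construction of the pairings $\kappa, \lambda, \nu, \xi$ in Theorem \ref{mainresult} — I do not anticipate any obstacle here beyond bookkeeping. The only point that warrants a line of commentary is that part (2) of Theorem \ref{MV} gives only surjectivity on $\mathrm{Hom}$-sets, so the isomorphism one lifts from $Z_{\mathbb{Q}}(Y,y)$ to $\mathbb{P}_{\mathbb{Q}}(Y)$ is guaranteed to exist but is not canonical; this, however, is harmless for the statement of the corollary, which only asserts the existence of a self-dual isomorphism.
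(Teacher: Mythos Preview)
Your proposal is correct and uses the same chain of ingredients as the paper (Proposition~\ref{defnKC}, Proposition~\ref{diagPZ}, Theorem~\ref{mainresult}, Theorem~\ref{MV}). The only difference is the final step: the paper invokes part~(2) of Theorem~\ref{MV} to lift the explicit isomorphism $\Phi\in\mathrm{Hom}_Z(\Theta_0,\mathcal{D}_Z(\Theta_0))$ to a morphism $\bar\Phi\in\mathrm{Hom}_{\mathbb{P}}(\uuline{\mathcal{S}_0},\mathcal{D}_V(\uuline{\mathcal{S}_0}))$ and declares it an isomorphism, whereas you invoke part~(1) directly on isomorphism classes. Your route is arguably cleaner, since part~(2) alone only guarantees that $\bar\Phi$ is a \emph{morphism} lifting $\Phi$, and one must still appeal to part~(1) (or an equivalent reflection-of-isomorphisms argument) to conclude $\bar\Phi$ is invertible; you bypass that wrinkle entirely.
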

\begin{proof}
By Theorem \ref{MV}, 
$\mu: Hom_\mathbb{P}(\uuline{\mathcal{S}_0},\mathcal{D}_V(\uuline{\mathcal{S}_0})) \to Hom_Z(\mu(\uuline{\mathcal{S}_0}),\mu(\mathcal{D}_V(\uuline{\mathcal{S}_0})))$ 
is a surjection. Since the diagram in (\ref{commdiag}) commutes,
$Hom_Z(\mu(\uuline{\mathcal{S}_0}),\mu(\mathcal{D}_V(\uuline{\mathcal{S}_0})))=$ $Hom_Z(\Theta_0,\mathcal{D}_Z(\Theta_0))$.
Now by Theorem \ref{mainresult}, we have constructed an isomorphism $\Phi \in Hom_Z(\Theta_0,\mathcal{D}_Z(\Theta_0))$.
Since $\mu$ is a surjection on morphisms, there exists an isomorphism $\bar{\Phi} \in Hom_\mathbb{P}(\uuline{\mathcal{S}_0},\mathcal{D}_V(\uuline{\mathcal{S}_0}))$.
Therefore $\uuline{\mathcal{S}_0}$ is self-dual in $\mathbb{P}_\mathbb{Q}(Y)$.
\end{proof}
\noindent
Remark: It is not known whether the isomorphism $\uuline{\mathcal{S}_0} \to \mathcal{D}_V(\uuline{\mathcal{S}_0})$
is unique.  It is conceivable that there may be several, essentially different, pairings
$\uuline{\mathcal{S}_0} \otimes \uuline{\mathcal{S}_0} \to \mathbb{D}_Y$.
\noindent
Corollary \ref{Sselfdual} completes the proof of the main result presented in Theorem \ref{mainthm}.
A direct corollary to Corollary \ref{Sselfdual} is Poincar\'e Duality.  We state this below.
\begin{cor}{(Poincar\'e Duality)} \label{poindual}
For all degrees $i \geq 0$, $H^i(Y;\uuline{\mathcal{S}_0}) \cong H^{2n-i}(Y;\uuline{\mathcal{S}_0})$.
\end{cor}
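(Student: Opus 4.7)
The plan is to derive Poincaré Duality directly from the self-duality established in Corollary \ref{Sselfdual} by feeding it into the pairing machinery of Proposition \ref{indmap}.

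First I would unpack self-duality: Corollary \ref{Sselfdual} produces an isomorphism $\bar{\Phi}\colon \uuline{\mathcal{S}_0} \xrightarrow{\sim} \mathcal{D}_V(\uuline{\mathcal{S}_0})$ in $\mathbb{P}_\mathbb{Q}(Y)$. Recalling that $\mathcal{D}_V(\uuline{\mathcal{S}_0}) = \uuline{RHom}^\cdot(\uuline{\mathcal{S}_0}, \mathbb{D}_Y)$, the isomorphism $\bar{\Phi}$ is equivalent (by adjunction) to a morphism
\begin{equation*}
\phi\colon \uuline{\mathcal{S}_0} \otimes \uuline{\mathcal{S}_0} \longrightarrow \mathbb{D}_Y[-2n]
\end{equation*}
which induces a quasi-isomorphism $\uuline{\mathcal{S}_0} \to \uuline{RHom}^\cdot(\uuline{\mathcal{S}_0}, \mathbb{D}_Y[-2n])$. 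In other words, $\phi$ is exactly a Verdier dual pairing between $\uuline{\mathcal{S}_0}$ and itself in the sense of Definition \ref{vdp}.

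Next I would apply Proposition \ref{indmap} to this pairing with $\uuline{S}^\cdot = \uuline{T}^\cdot = \uuline{\mathcal{S}_0}$. That proposition immediately gives a non-degenerate pairing on hypercohomology
\begin{equation*}
\hat{\phi}\colon H^i(Y;\uuline{\mathcal{S}_0}) \otimes H^{2n-i}(Y;\uuline{\mathcal{S}_0}) \longrightarrow H^0(Y;\mathbb{D}_Y) \cong \uuline{\mathbb{Q}}
\end{equation*}
for every $i \geq 0$. A non-degenerate pairing of finite-dimensional $\mathbb{Q}$-vector spaces yields a canonical isomorphism $H^i(Y;\uuline{\mathcal{S}_0}) \cong H^{2n-i}(Y;\uuline{\mathcal{S}_0})^*$, and since the spaces in question have equal (finite) dimension they are abstractly isomorphic, giving the desired $H^i(Y;\uuline{\mathcal{S}_0}) \cong H^{2n-i}(Y;\uuline{\mathcal{S}_0})$.

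The only step that requires any care is the finite-dimensionality of the hypercohomology groups, which is the main potential obstacle. This however follows from constructibility: $\uuline{\mathcal{S}_0}$ is a constructible complex of $\mathbb{Q}$-sheaves on the compact stratified space $Y$, and by Parts 1 and 2 of Theorem \ref{mainthm} each $H^i(Y;\uuline{\mathcal{S}_0})$ is built from hypercohomology groups of $Y$, $Y^o$, and the subspaces $K_0$, $C_0$, all of which are finite-dimensional. Once finite-dimensionality is in hand, the non-degenerate pairing supplied by Proposition \ref{indmap} immediately produces the Poincaré isomorphism in every degree, completing the proof.
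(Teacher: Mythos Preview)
Your proposal is correct and matches the paper's intended approach: the paper gives no explicit proof of Corollary~\ref{poindual}, simply declaring it ``a direct corollary to Corollary~\ref{Sselfdual},'' and the machinery of Definition~\ref{vdp} and Proposition~\ref{indmap} was set up precisely so that self-duality yields the non-degenerate pairing you describe. Your write-up just makes explicit the steps the paper leaves to the reader.
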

\section{An Example}
\subsection{The Construction}
We will look at a very simple construction of space time $\mathcal{M}  = X^{3,1} \times Y$ where $Y$ is a single 
node Calabi-Yau manifold (simple stratified space) as presented on pp. 276-277 in \cite{hubsch:best}.  
We will restate this construction here. 
Consider the family of quintic hypersurfaces in $\mathbb{P}^4$, defined by
\begin{equation}
 I:=x_5^3 (\Sigma_{i=1}^4 x_i^2) + \Sigma_{i=1}^5 a_i x_i^5 ~=~0, \label{e:1nodeY}
\end{equation}
where $x_i$ are the homogeneous coordinates of $\mathbb{P}^4$. 
On the $x_5 = 0$ hyperplane $\mathbb{P}^3 \subset \mathbb{P}^4$ we have that
\begin{equation}
I=\Sigma_{i=1}^4 a_i x_i^5,
\end{equation}
\begin{equation}
dI=5\Sigma_{i=1}^4 dx_i \cdot a_i x_i^4.
\end{equation}
For generic $a_i$, $I=0=dI$ only at $x_i=0$ which is not in $\mathbb{P}^4$, so the singular points of $Y$ are all in 
the $x_5 \neq 0$ coordinate patch and we set $x_5=1$.  For generic choice of $a_i$, $I=0$ and $dI=0$ have no common 
solution, so that the quintic hypersurface $I=0$ in $\mathbb{P}^4$ is smooth.

Let $a_5 \rightarrow 0$.  Then,
$dI=0$ implies $x_i(5a_ix_i^3 +2) =0$ and candidate singular points $(p^{\#})$ are parametrized as:
\begin{equation}
x_i = - \xi_i \sqrt[3]{\frac{2}{5a_i}} \cdot \omega^{k_i}
\end{equation}
where $\omega =e^{\frac{2i\pi}{3}}$ for $k_i=0,1,2$, and $\xi_i=0,1$ for $i=1,2,3,4$.  At these points
\begin{equation}
I(p^{\#}) = \frac{3}{5} \sqrt[3]{\frac{4}{25}} \Sigma_{i=1}^4 \frac{\xi_i\,\omega^{2k_i}}{a_i^{\frac{2}{3}}},
\end{equation}
the vanishing of which brings about several cases. The case of interest for the purposes of this paper is when $I(p^\#)$ vanishes at a single point, $(0,0,0,0,1)\in\IP^4$, i.e., setting $\xi_i=0$ $i=1,2,3,4$, whereupon both $I=0$ and $dI=0$---regardless of the choice of $a_i$'s.  There, $\det I''=16 \neq 0$ and this singular point is a node. 

We now fix some generic choice of $a_1,a_2,a_3,a_4$ and regard eqn.~(\ref{e:1nodeY}) as a pencil of quintics in
 $\IP^4$, parametrized by $a_5$. The quintic hypersurfaces in $\IP^4$ defined by eqn.~(\ref{e:1nodeY}) for each
 $a_5\neq0$ and $|a_5|$ not too large\footnote{Given concrete values of $a_1,a_2,a_3,a_4$, there may well exist an
 upper bound on $|a_5|$ for this to be true.} are all smooth; the one at $a_5$ is however singular and has a single
 node. Let $Y_{a_5}$ denote these smooth quintics, for $a_5\neq0$; let $Y=Y_0$ denote the 1-node singular quintic, at
$a_5=0$.

\subsection{Computing $H^*(Y)$}
We now focus on the quintic with a single node, $Y=Y_0$ where $Y^o$ denotes the non-singular part of $Y$. Let
 $\widetilde{Y}$ be the small resolution of $Y$; there is no obstruction to local surgery, replacing the node by a
 $\IP^1$. Note that $\widetilde{Y}$ cannot be K\"ahler: any putative K\"ahler form would have to be null 
on the exceptional $\IP^1$~\cite{hubsch:best}.\footnote{The single node example presented here has a major drawback for
application in superstring theory: neither $Y=Y_0$ nor its small resolution $\widetilde Y$ admit a K\"ahler metric. 
 However, the purpose of this example is to demonstrate that the complex of sheaves $\uuline{\mathbb{S}_0}$ 
can provide the needed ranks in all dimensions (as suggested by Hubsch \cite{hub:singspace} and Strominger 
\cite{stro:massless}).}\\
\indent We want to compute $H^*$ using $\uuline{\mathbb{S}_0}$ and compare it to the results obtained from string
theory. For $n \neq 3$, $H^n(Y;\uuline{\mathbb{S}_0}) \simeq H^n(Y;\uuline{IC}) \simeq
H^n(\widetilde{Y};\uuline{\mathbb{Q}})$ where $\widetilde{Y}$ is the small resolution.  In the middle
dimension we must use part 2 (short exact sequences) of Theorem 3.1.  Recall that
$H^n(Y;\uuline{\mathcal{S}_0})$ is specified by the following two canonical short exact sequences:
\begin{enumerate}
\item
$0 \to K_0 \to  H^n(Y;\uuline{\mathcal{S}_0}) \to H^n(Y^o) \to 0$
\item
$0 \to H^n_c(Y^o) \to H^n(Y;\uuline{\mathcal{S}_0}) \to C_0 \to 0$
\end{enumerate}
where $K_0=Im(H^n_c(c^o L) \to H^n_c(Y^o))$, $C_0=Im(H^n(Y^o) \to H^{n+1}_c(c^o L))$
with the non singular part $Y^o$ and $L = S^2 \times S^3$ the link of the singular point.  In addition,
$H^n(Y;\uuline{\mathbb{Q}}) \simeq H^n_c(Y^o;\uuline{\mathbb{Q}})$, $H_c^n(c^o L;\uuline{\mathbb{Q}}) \simeq H^n(L;\uuline{\mathbb{Q}})$
and $H^n(\widetilde{Y};\uuline{\mathbb{Q}}) \simeq H^n(Y;\uuline{IC})$.
\begin{table}[h]
\caption{Dimension of $H^n(Y;*)$}
\centering
\begin{tabular}{|c|c|c|c|} 
\hline
Deg & $H^n(Y;\uuline{\mathbb{S}_0})$ & $H^n(Y;\uuline{\mathbb{Q}})$ & $H^n(\widetilde{Y};\uuline{\mathbb{Q}})$\\
\hline
6 & 1 & 1 & 1 \\
\hline
5 & 0 & 0  & 0 \\
\hline
4 & 1 & 2 & 2  \\
\hline
3 & 204 & 203 & 202  \\
\hline
2 & 1 & 1 & 2 \\
\hline
1 & 0 & 0 & 0 \\
\hline
0 & 1 & 1  & 1 \\
\hline
\end{tabular}
\label{table:coho}
\end{table}
\quash{
\begin{table}[h]
\caption{Dimension of $H^n(Y;*)$ for $\uuline{\mathbb{S}_0}$, $\uuline{IC}$, and $\uuline{\mathbb{Q}}$.}
\centering
\begin{tabular}{|c|c|c|c||c|c|c||c|c|c|} 
\hline
Deg & $H^n(Y;\uuline{\mathbb{S}_0})$ & $H^n(Y;\uuline{\mathbb{Q}})$ & $H^n(\widetilde{Y};\uuline{\mathbb{Q}})$ 
& $H_c^n(c^o L;\uuline{\mathbb{Q}})$ & $H^n_c(Y^o;\uuline{\mathbb{Q}})$ & $K_0$ 
&  $H^n(Y^o;\uuline{\mathbb{Q}})$ &  $H^{n+1}_c(c^o L;\uuline{\mathbb{Q}})$ & $C_0$ \\
\hline
6 & 1 & 1 & 1 & 0 & 1 & 0 & 1 & 0 & 0\\
\hline
5 & 0 & 0  & 0 & 1 & 0 & 1 & 0 & 0 & 0\\
\hline
4 & 1 & 2 & 2  & 0 & 2 & 0 & 1 & 1 & 1\\
\hline
3 & 204 & 203 & 202  & 1 & 203 & 1 & 202 & 0 & 0\\
\hline
2 & 1 & 1 & 2  & 1 & 1 & 1 & 1 & 1 & 1\\
\hline
1 & 0 & 0 & 0  & 0 & 0 & 0 & 0 & 1 & 0\\
\hline
0 & 1 & 1  & 1 & 1 & 1 & 1 & 1 & 0 & 0\\
\hline
\end{tabular}
\label{table:coho}
\end{table}
\begin{table}[h]
\caption{Validation Check of Theorem \ref{mainthm}}
\centering
\begin{tabular}{|c|c|c|c||c|c|c|} 
\hline
Deg & $K_0$ & $H^n(Y;\uuline{\mathbb{S}_0})$ & $H^n(Y^o;\uuline{\mathbb{Q}})$ 
& $H_c^n(Y^o ;\uuline{\mathbb{Q}})$ & $H^n(Y;\uuline{\mathbb{S}_0})$ &  $C_0$ \\
\hline
6 & 0 & 1 & 1 & 1 & 1 & 0  \\
\hline
5 & 1 & 0  & 0 & 0 & 0 & 0 \\
\hline
4 & 0 & 1 & 1  & 2 & 1 & 1  \\
\hline
3 & 1 & 204 & 202  & 203 & 204 & 0  \\
\hline
2 & 1 & 1 & 1  & 1 & 1 & 1  \\
\hline
1 & 0 & 0 & 0  & 0 & 0 & 0 \\
\hline
0 & 1 & 1  & 1 & 1 & 1 & 0  \\
\hline
\end{tabular}
\label{table:coho2}
\end{table}
}
\subsection{String Theoretic Description}
The analysis of Type~IIB superstring compactification on $Y$ \`a la Strominger \cite{stro:massless} would proceed as a
 limiting process, starting from some $Y_{a_5\neq0}$, and taking the limit $a_5\to0$. For $|a_5|$ sufficiently small,
 $Y_{a_5\neq0}$ is a smooth quintic and a Type~IIB superstring compactification would feature one massless vector
 supermultiplet of $N=2$ supersymmetry in the effective 4-dimensional spacetime, and 101 massless hypermultiplets plus
 their Hermitian conjugates. In the limit $a_5\to0$ one of these hypermultiplets and its Hermitian conjugate would be
 ``frozen'' to a constant value and  become removed from the spectrum of massless (variable) fields. However,
 Strominger showed that a massless state and its Hermitian conjugate would become massless in the $a_5\to0$ limit where
 $|a_5|$ is small enough, $Y_{a_5 \neq 0}$ contains a ``vanishing'' $S^3$, which shrinks to a point in the
 $a_5 \to 0$ limit.  Strominger's replacement state turns out to have a mass proportional to the volume of the vanishing
 $S^3$. This is massive in compactifications on $Y_{a_5\neq0}$, but becomes massless in compactifications on $Y=Y_0$.

Therefore, the spectrum of massless fields in Type~IIB superstring compactifications on $Y_{a_5}$ remains constant in
 the $a_5 \to 0$ limit which can be obtained from Table~\ref{table:coho}:
\begin{equation}
\dim H^n(Y;\uuline{\mathbb{S}_0}) = \dim H^n(Y_{a_5\neq0};\uuline{\mathbb{Q}}).
 \label{e:EQ}
\end{equation}
Note that in singular 3-folds with more than one node that \textit{do} have a K\"ahler small resolution, the
$\uuline{\mathbb{S}_0}$-valued cohomology of the singular model would equal neither the $\uuline{\mathbb{Q}}$-valued
 cohomology of the small resolution, nor that of the smooth deformation. The equality~(\ref{e:EQ}) owes to the fact
 that $Y$ has a single node, whereupon its small resolution cannot be K\"ahler.

\section{Final Remarks}
In this paper we presented the construction and properties of a self-dual perverse sheaf $\uuline{\mathcal{S}_0}$ using 
techniques of MacPherson-Vilonen ~\cite{mv:elem}.  In addition, this
perverse sheaf was shown to satisfy Poincar\`e duality (property (3) of the K\"ahler package) as well as having relevant
applications for cases of Type IIB superstring compactification.  It is currently 
not known whether the remainder of the K\"ahler package holds for $\uuline{\mathcal{S}_0}$.  These are currently open problems.  
\subsection{The K\"ahler Package}
Proofs of all four parts of the K\"ahler package were not addressed in this paper.
Only Corollary \ref{poindual}, which was a consequence of the self-duality of $\uuline{\mathcal{S}_0}$,
provided Poincare duality.  It would take additional effort to prove the remaining parts of the
K\"ahler package.  The difficulties would lie in proving Hodge decomposition and
the Kunneth formula. It is not clear if there is a pure or mixed Hodge structure of $\uuline{\mathcal{S}_0}$.  This has
to be explored.   

\subsection{Multiple Singular Points}
There are cases that arise in String theory where the target space will have more than one singularity. 
We would like to extend this effort (one singular point) to include the case of multiple singular points.
Although this may seem to be a straight forward transition, there appear
to be problems in preserving some of the properties of the needed maps.  

In the case of multiple singular points, the singularity $\{y\}$ now becomes a singular set made up of isolated distinct singular points
$\Sigma=\{y_1,...,y_r\}$ where r is the number of singular points.  As a result, the injection $j$
now becomes  $\hat{j}: \Sigma \hookrightarrow Y$ while $i:Y^o \to Y$ remains unchanged. 
Consequently, the maps in the long exact 
sequences that arise from the distinguished triangles are altered by the inclusion of more singular points.  In
order to qualify this difficulty we can begin by restating the last three identities of Lemma \ref{cohvalues} as follows.

\begin{lem} \label{cohvalues2}
Let $Y$ be a simple stratified space and let $\uuline{\mathcal{F}}^\cdot$ be a complex of sheaves on Y.  
Let $\Sigma=\{y_1,...,y_r\} \in Y$ be the singular set and let $U_{y_b}$ be a distinguished neighborhood of $y_b$ so that
$\bar{U}_{y_b} \simeq cL_{y_b}$ and $U_{y_b} \simeq c^oL_{y_b}$ for $1 \leq b \leq r$.  Let $i:Y^o \hookrightarrow Y$
and $\hat{j}: \Sigma \hookrightarrow Y$ be inclusions.  Then there are natural isomorphisms
\begin{enumerate}
\item
$H^m(Y;\hat{j}_*\hat{j}^*\uuline{\mathcal{F}}^\cdot) \cong \bigoplus_{b=1}^r H^m(U_{y_b};\uuline{\mathcal{F}}^\cdot)$, $\forall $ $m > 0$,
\item
$H^m(Y;\hat{j}_*\hat{j}^!\uuline{\mathcal{F}}^\cdot) \cong \bigoplus_{b=1}^r H^m_c(U_{y_b};\uuline{\mathcal{F}}^\cdot)$, $\forall $ $m > 0$,
\item 
$H^m(Y;\hat{j}^*i_*\uuline{\mathcal{F}}^\cdot) \cong \bigoplus_{b=1}^r H^m(L_{y_b} \times (0,1);\uuline{\mathcal{F}}^\cdot)$, $\forall $ 
$m > 0$ where $L_{y_b}$ is the link of the singular point $y_b$ for $1 \leq b \leq r$.
\end{enumerate}
\end{lem}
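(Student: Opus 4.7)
The strategy is to reduce each of the three statements to the corresponding assertion in Lemma~\ref{cohvalues} by exploiting that the singular set $\Sigma = \{y_1,\ldots,y_r\}$ is a finite disjoint union of isolated points. First I would shrink the distinguished neighborhoods so that $U_{y_1},\ldots,U_{y_r}$ are pairwise disjoint; this is possible because $Y$ is Hausdorff and each $y_b$ has its own cone-like distinguished neighborhood. Writing $j_b : \{y_b\} \hookrightarrow Y$ for the closed inclusion of the $b$-th singular point, $\hat{j}$ factors canonically through $\coprod_{b=1}^r j_b$, and the operations in items~(1)--(3) are all local around $\Sigma$.

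The central step is the direct-sum decomposition
\begin{equation}
\hat{j}_* \hat{j}^* \uuline{\mathcal{F}}^\cdot \;\cong\; \bigoplus_{b=1}^r (j_b)_* (j_b)^* \uuline{\mathcal{F}}^\cdot,
\end{equation}
together with its analogue for $\hat{j}^!$, which hold in $\mathcal{D}^b_Y$ because $\Sigma$ is a finite coproduct of points and pushforward along a closed embedding commutes with such coproducts. Applying $H^m(Y;-)$, which commutes with finite direct sums of complexes, yields
\begin{equation}
H^m(Y; \hat{j}_* \hat{j}^* \uuline{\mathcal{F}}^\cdot) \;\cong\; \bigoplus_{b=1}^r H^m(Y; (j_b)_* (j_b)^* \uuline{\mathcal{F}}^\cdot),
\end{equation}
and likewise for the $\hat{j}^!$ version. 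Items~(3) and~(4) of Lemma~\ref{cohvalues} identify each summand with $H^m(U_{y_b}; \uuline{\mathcal{F}}^\cdot)$ and $H^m_c(U_{y_b}; \uuline{\mathcal{F}}^\cdot)$ respectively, giving items~(1) and~(2). For item~(3), $\hat{j}^* i_* \uuline{\mathcal{F}}^\cdot$ is a complex of sheaves on the discrete set $\Sigma$, so its hypercohomology is the direct sum of its stalks $(j_b)^* i_* \uuline{\mathcal{F}}^\cdot$, and item~(5) of Lemma~\ref{cohvalues} identifies each stalk with $H^m(L_{y_b} \times (0,1); \uuline{\mathcal{F}}^\cdot)$ via the collar retraction of $U_{y_b} \setminus \{y_b\}$ onto its link.

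The only obstacle worth mentioning is verifying that the direct-sum decomposition is valid at the derived-category level rather than merely at the level of abelian sheaves, and that hypercohomology genuinely commutes with it. Both are straightforward: the complexes $(j_b)_* (j_b)^* \uuline{\mathcal{F}}^\cdot$ (and their $\hat{j}^!$ analogues) have pairwise disjoint supports $\{y_b\}$, so the canonical map from the coproduct is automatically a quasi-isomorphism on stalks, and finite direct sums are exact in $\mathcal{D}^b_Y$. In particular, no new analytic or sheaf-theoretic input beyond the single-point Lemma~\ref{cohvalues} is required, and the proof is essentially a bookkeeping extension of the $r=1$ case to a finite disjoint union of isolated singular points.
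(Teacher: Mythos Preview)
Your approach is correct and is precisely the natural argument. The paper does not supply a proof of this lemma: it introduces Lemma~\ref{cohvalues2} explicitly as a restatement of the last three identities of Lemma~\ref{cohvalues} in the multi-point setting, follows it only with a parenthetical remark identifying $U_{y_b}$ with the cone $cL_{y_b}$, and then proceeds to use the lemma. So both you and the paper regard the result as an immediate bookkeeping extension of the $r=1$ case; you have simply made explicit the direct-sum decomposition over the discrete singular set and the appeal to items~(3)--(5) of Lemma~\ref{cohvalues} that the paper leaves implicit.
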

\noindent
(Remark: Note that $U_{y_b} \cong cL_{y_b}$ where $cL_{y_b}$ is the cone over the link of the singular point $y_b$.)\\
\noindent
Consider the distinguished triangles in diagrams (\ref{d1}) and (\ref{d2}) applied to $\uuline{\mathcal{S}_0}^\cdot$.  Rewriting these triangles using 
$\hat{j}:\Sigma \hookrightarrow Y$ instead of $j:\{y\} \hookrightarrow Y$ we obtain,
\begin{equation}\label{d1jhat} 
\xymatrix{
R\hat{j}_*\hat{j}^!\uuline{\mathcal{S}_0}^\cdot \ar[rr]& &  \uuline{\mathcal{S}_0}^\cdot \ar[dl]\\ 
& Ri_*i^* \uuline{\mathcal{S}_0}^\cdot \ar[ul]^{[1]}\\ }
\end{equation}
\vspace{0.5cm}
\begin{equation} \label{d2jhat}
\xymatrix{
Ri_!i^* \uuline{\mathcal{S}_0}^\cdot \ar[rr]& &  \uuline{\mathcal{S}_0}^\cdot \ar[dl]\\ 
& R\hat{j}_*\hat{j}^* \uuline{\mathcal{S}_0}^\cdot \ar[ul]^{[1]}\\ }
\end{equation}
The resulting long exact sequence in degree n for the triangle in diagram (\ref{d1jhat}) gives,
\begin{equation}\label{d1les}
... \to \bigoplus_{b=1}^r H^n_c( cL_{y_b} ; \uuline{\mathcal{S}_0}^\cdot) \to H^n(Y;\uuline{\mathcal{S}_0}^\cdot) \to H^n(Y^o) \to \bigoplus_{b=1}^r H^{n+1}_c( cL_{y_b} ; \uuline{\mathcal{S}_0}^\cdot) \to  ...
\end{equation}
  
Let the maps of the long exact sequence in eqn. (\ref{d1les}) be defined as:
\begin{align}
\alpha_1: & \bigoplus_{b=1}^r H^n_c( cL_{y_b} ; \uuline{\mathcal{S}_0}^\cdot) \to H^n(Y;\uuline{\mathcal{S}_0}^\cdot)\\
\beta_1: & H^n(Y;\uuline{\mathcal{S}_0}^\cdot) \to H^n(Y^o) \\
\gamma_1: & H^n(Y^o) \to \bigoplus_{b=1}^r H^{n+1}_c( cL_{y_b} ; \uuline{\mathcal{S}_0}^\cdot)
\end{align}
Similarly the resulting long exact sequence in degree n for the triangle in diagram(\ref{d2jhat}) gives,
\begin{equation}\label{d2les}
... \to \bigoplus_{b=1}^r H^{n-1}( cL_{y_b} ; \uuline{\mathcal{S}_0}^\cdot) \to H^n_c(Y^o) \to H^n(Y;\uuline{\mathcal{S}_0}^\cdot) \to \bigoplus_{b=1}^r H^n( cL_{y_b} ; \uuline{\mathcal{S}_0}^\cdot) \to  ...
\end{equation}
Similarly let the maps of the long exact sequence in eqn. (\ref{d2les}) be defined as:
\begin{align}
\alpha_2: & \bigoplus_{b=1}^r H^{n-1}( cL_{y_b} ; \uuline{\mathcal{S}_0}^\cdot) \to H^n_c(Y^o)\\
\beta_2: & H^n_c(Y^o) \to H^n(Y;\uuline{\mathcal{S}_0}^\cdot) \\
\gamma_2: & H^n(Y;\uuline{\mathcal{S}_0}^\cdot) \to \bigoplus_{b=1}^r H^n( cL_{y_b} ; \uuline{\mathcal{S}_0}^\cdot)
\end{align}
  
Recall the figure where the map c is an injection and d is a surjection that was shown to exist
in the proof of Theorem \ref{mainthm}.  These were requirements imposed by String theory.  (See 
section 1.3 for a discussion.)
\begin{equation} \label{d3new}
\xymatrix{
& & & &  H^n(Y;\mathcal{S}_0) \ar@{->>}[d]^d \\ 
  & H^{n-1}(i_*i^*\mathbb{Q})\ar[r]^\alpha & H^{n}(j_*j^!\mathbb{Q})\ar[r]^\beta \ar@{->>}[d]^a  
	&  H^n_c(Y^o) \ar@{^{(}->}[ur]^c \ar[r]^\gamma
    & H^{n}(Y^o) \ar[r]^\delta \ar@{->>}[d]^e & H^{n+1}(j_*j^!\mathbb{Q}) \\
	& & K_0=Im(\beta) \ar@{^{(}->}[ur]^b  & & C_0=Im(\delta) \ar@{^{(}->}[ur]^f }
\end{equation}
Equation (\ref{d3new}) and its maps were defined in the case of one singular point where the map $j:\{pt\} \to Y$.  Now if we replace
$j$ with $\hat{j}:\Sigma \to Y$ it is not completely clear if the map c will remain an injection and d will remain
a surjection.  (Recall that in order to obtain a cohomology for String theory it is a requirement that
the map c be injective and d be surjective.)  These are necessary conditions in order to obtain a larger rank of cohomology in the middle dimension.  Notice that in the long exact sequences using $\hat{j}$, eqns. 
(\ref{d1les}) and (\ref{d2les}),
if we replaces $j$ by $\hat{j}$, the maps c and d can be described as follows:
\begin{align}
c \text{ corresponds to the map } & \beta_2 \Rightarrow \text{ $\alpha_2$ needs to be the 0-map.} \\
d \text{ corresponds to the map } & \beta_1  \Rightarrow \text{ $\gamma_1$ needs to be the 0-map.}
\end{align}
  
It is not quite clear however, if for all singular points in $\Sigma$ the maps $\alpha_2$ and $\gamma_1$ can
be made to be the 0-maps.  This is currently an obstruction to extending the singular point case to the 
case of a singular set.  Hopefully a future effort will address this.
\\
\\
\noindent
\textbf{Acknowledgments}: The author wishes to extend special thanks to R.M Goresky and T. Hubsch for the very helpful
discussions and detailed reviews of the final manuscript.  The author also wishes to thank D. Massey for helpful 
discussions concerning perverse sheaves and a review of the initial manuscript.  Many thanks to R. MacPherson for
 making the initial remark that there was such a perverse sheaf that had the properties we were looking for. 
The Institute for Advanced Study,
 Princeton, NJ deserves many compliments for its hospitality and use of facilities during the development of this work.
\bigskip
\addcontentsline{toc}{section}{References}
%
%

\end{document}